%
%
%
%
%
\documentclass{amsart}

\usepackage{a4wide}
\usepackage{cite}

\usepackage[T1]{fontenc}
\usepackage{textcomp}

\usepackage{amsmath}
\usepackage{amsfonts}
\usepackage{amsthm}
\usepackage{amssymb}
\usepackage{mathrsfs}

\newtheorem{theorem}{Theorem}[section]
\newtheorem{lemma}[theorem]{Lemma}
\newtheorem{prop}[theorem]{Proposition}

\theoremstyle{definition}
\newtheorem{definition}[theorem]{Definition}

\newtheorem{conjecture}[theorem]{Conjecture}
\newtheorem{remark}[theorem]{Remark}

\numberwithin{equation}{section}

\def\abs#1{\lvert#1\rvert}
\DeclareMathOperator*{\Res}{\mathrm{Res}}
\DeclareMathOperator*{\Ht}{\mathrm{ht}}
\DeclareMathOperator*{\Aut}{\mathrm{Aut}}
\DeclareMathOperator*{\Id}{\mathrm{id}}
\begin{document}

\title{Functional equations for Weng's zeta functions for $(G,P)/\mathbb{Q}$}
\author{Yasushi Komori}

\begin{abstract}
  It is shown that
  Weng's zeta functions associated with arbitrary semisimple algebraic
  groups defined over the rational number field and their maximal
  parabolic subgroups satisfy the functional equations.
\end{abstract}
\maketitle

\section{Introduction}
\label{sec:introduction}
Recently, Lin Weng introduced a new class of abelian zeta functions
associated to a pair of reductive algebraic group $G$ and its maximal
parabolic subgroup $P$, which are related with constant terms of
Eisenstein series.  In this paper, we simply refer to these zeta
functions as Weng's zeta functions.  These are motivated by and
closely related to non-abelian zeta functions called ``high rank zeta
functions'' associated with algebraic number fields, which were also
introduced by Weng himself from a viewpoint of Arakelov geometry based
on Iwasawa's interpretation and Tate's Fourier analysis on ad\'eles.
High rank zeta functions are generalizations of the Dedekind zeta
functions and in fact, rank one zeta functions coincide with the Dedekind
zeta functions up to constant multiples.  Hence the study of Weng's
zeta functions is not only interesting itself but also suggestive for
the study of the Dedekind zeta functions.  The profound background,
the path to the discovery, and the development of Weng's zeta
functions are detailed in his elaborated papers
\cite{Weng06a,Weng06b,Weng07,Weng10}.

One of the most significant properties for Weng's zeta functions is
the behavior of their zeros.  Weng conjectured that for any pair
$(G,P)$, Weng's zeta functions satisfy certain functional equations
and the Riemann hypothesis, as is expected or shown for various kinds
of zeta functions.  In fact,
in some special cases, it was shown in
\cite{LS06,Suz07,Suz08,SW08,Ki10} that they satisfy standard
functional equations and the Riemann hypothesis.

In this paper, we establish the functional equations in arbitrary
semisimple cases in a unified way.  We will see that the functional
equations are governed by the involutions on the Weyl groups (see the
last paragraph of Section \ref{sec:example} and Lemma
\ref{lm:sym_fg}).

Since the proofs known so far for the Riemann hypothesis for Weng's zeta
functions essentially use the functional equations, our result will be
a first and important step toward a comprehensive proof of the general
Riemann hypothesis.  Furthermore we give the explicit forms of Weng's
zeta functions and the precise description of the centers for the
functional equations (see \eqref{eq:center}), by which these zeta
functions will become more accessible than before.

This paper is organized as follows. In Section
\ref{sec:wengs-zeta-functions}, we give basic facts about root systems
and state the main results.  In Section \ref{sec:example}, to explain
the idea of the general proof, we demonstrate the proof of the
functional equation in a simple example, which also explains the
symbols used in the next sections.  In Section
\ref{sec:preliminaries}, we show some statements about properties of
Weyl groups and subsets of roots.  The last section is devoted to the
proof of the general functional equations.

\bigskip

Acknowledgement: The author would like to thank Lin Weng, Masatoshi
Suzuki and Hiroyuki Ochiai for fruitful discussion and critical
reading of the manuscript.  Thanks are also due to Kohji Matsumoto and
Hirofumi Tsumura for valuable comments.

\section{Weng's zeta functions and their functional equations}
\label{sec:wengs-zeta-functions}

We first fix notation and summarize basic facts about root systems and
Weyl groups. See \cite{Hum,Hum72,Bourbaki} for the details.  Let $V$
be an $r$-dimensional real vector space equipped with an inner product
$\langle \cdot,\cdot\rangle$.  Let $\Phi\subset V$ be a root system
of rank $r$ and $\Delta=\{\alpha_1,\ldots,\alpha_r\}$, its fundamental
system.  Let $\alpha^\vee=2\alpha/\langle\alpha,\alpha\rangle$
be the coroot associated with $\alpha\in\Phi$.  Let
$\Lambda=\{\lambda_1,\ldots,\lambda_r\}$ be the fundamental weights
satisfying $\langle \alpha_i^\vee,\lambda_j\rangle=\delta_{ij}$.  Let
$\Phi_+$ be the corresponding positive system of $\Phi$ and
$\Phi_-=-\Phi_+$ so that $\Phi=\Phi_+\cup\Phi_-$.  Let
\begin{equation}
\rho=\frac{1}{2}\sum_{\alpha\in\Phi_+}\alpha=\sum_{j=1}^r \lambda_j
\end{equation}
be the Weyl vector.  Let
$\Ht\alpha^\vee=\langle\rho,\alpha^\vee\rangle$ be the height of
$\alpha^\vee$.

Let $W$ be the Weyl group generated by simple reflections
$\sigma_j:V\to V$ for $\alpha_j$.  For $w\in W$, let
$l(w)=\abs{\Phi_w}$ be the length of $w$, where
$\Phi_w=\Phi_+\cap w^{-1}\Phi_-$.  Let $w_0$ be the longest
element of $W$.  Then we have $w_0^2=\Id$, $w_0\Delta=-\Delta$ and
$w_0\Phi_+=\Phi_-$.

Let $\Aut(\Phi)$ be the group of automorphisms of $V$ which
preserves $\Phi$. Then $W\subset\Aut(\Phi)$ and $W$ is a normal
subgroup of $\Aut(\Phi)$.  Let $\Gamma$ be the Dynkin diagram of
$\Phi$ and $\Aut(\Gamma)$, the group of automorphisms of $\Gamma$.
We identify $\Aut(\Gamma)$ with a group of permutations of indices
$\{1,\ldots,r\}$.  We also regard $\Aut(\Gamma)\subset\Aut(\Phi)$ in
a natural way.  For $\varpi\in\Aut(\Gamma)$, we have $\varpi\Delta=\Delta$
and $\varpi\Phi_+=\Phi_+$.  In fact, by use of the simple
transitivity of $W$ on positive systems, it is easily shown that
$\Aut(\Phi)=\Aut(\Gamma)\ltimes W$.  Since $-w_0\Delta=\Delta$, we have
$-\Id=\varpi_0 w_0$ for some $\varpi_0\in\Aut(\Gamma)$.  We see that
$\varpi_0^2=\Id$.

In the following, we fix $p$ with $1\leq p\leq r$.  Let $\Phi_p$ be
the root system normal to $\lambda_p$.
A fundamental
system of $\Phi_p$ is given by $\Delta_p=\Delta\setminus\{\alpha_p\}$.
Let $\Phi_{p+}=\Phi_p\cap\Phi_+\subset\Phi_+$ be the
corresponding positive system of $\Phi_p$.  Let
\begin{equation}
\rho_p=\frac{1}{2}\sum_{\alpha\in\Phi_{p+}}\alpha.    
\end{equation}
Note that $\rho_p\neq\sum_{j\neq p}^r\lambda_j$ in general.
Let $W_p$ be the Weyl group of $\Phi_p$.
Let $w_p$ be the longest
element of $W_p$.  Then we have $w_p^2=\Id$, $w_p\Delta_p=-\Delta_p$ and
$w_p\Phi_{p+}=\Phi_{p-}$.

Let $\mathbb{N}$ be the set of all positive integers.
Throughout this paper, we use the constants
\begin{equation}
\label{eq:center}
  c_p=2\langle\lambda_p-\rho_p,\alpha_p^\vee\rangle\in\mathbb{N},
\end{equation} 
which are important quantities describing the critical lines of Weng's
zeta functions.  Note that
\begin{equation}
  c_p=c_q
\end{equation}
for $q\in\Aut(\Gamma)p$.

Following \cite{Weng07,Weng10}, we introduce Weng's zeta function
associated with a semisimple algebraic group $G$ of rank $r$ 
defined over
the rational number field $\mathbb{Q}$ and its maximal parabolic
subgroup $P$.  
Let $\Phi$ be the root system of $G$, and $p$ be the index for which
a simple root $\alpha_p\in\Delta$ corresponds to $P$.  Similarly we use
the index $q$ corresponding to another maximal parabolic subgroup $Q$.
For the details of Weng's zeta functions, 
see \cite{Weng07,Weng10} and the references therein.

Let $\xi(s)=\pi^{-s/2}\Gamma(s/2)\zeta(s)$, where $\zeta$ is the
Riemann zeta function.  The poles of $\xi(s)$ are simple and on
$s=0,1$ with their residues being $-1,1$ respectively.  Moreover we
have the functional equation $\xi(1-s)=\xi(s)$. Then the period
$\omega^G_{\mathbb{Q}}(\lambda;T)$ for $G$ over $\mathbb{Q}$ is
defined as follows.
\begin{definition}[{Periods \cite[p.12, Fact E$'$]{Weng10}}]
  For $\lambda,T\in V$,
  \begin{align}
    \label{eq:def_period}
    \omega^G_{\mathbb{Q}}(\lambda;T)&=\sum_{w\in W}
    e^{\langle w\lambda-\rho,T\rangle}
    \biggl(
    \prod_{\alpha\in\Delta}
    \frac{1}{\langle w\lambda-\rho,\alpha^\vee\rangle}
    \biggr)
    \biggl(
    \prod_{\alpha\in\Phi_w}
    \frac{\xi(\langle\lambda,\alpha^\vee\rangle)}{\xi(\langle\lambda,\alpha^\vee\rangle+1)}
    \biggr),\\
    \omega^G_{\mathbb{Q}}(\lambda)&=\omega^G_{\mathbb{Q}}(\lambda;0).
  \end{align}
\end{definition}
Put $\Delta_p=\Delta\setminus\{\alpha_p\}=\{\beta_1,\ldots,\beta_{r-1}\}$ and $s=\langle\lambda-\rho,\alpha_p^\vee\rangle$. Let
\begin{equation}
\label{eq:def_omega}
  \omega^{G/P}_{\mathbb{Q}}(s;T)=
  \Res_{\langle\lambda-\rho,\beta_1^\vee\rangle=0}
  \cdots
  \Res_{\langle\lambda-\rho,\beta_{r-1}^\vee\rangle=0}
  \omega^G_{\mathbb{Q}}(\lambda;T).
\end{equation}
Then we have the explicit form of $\omega^{G/P}_{\mathbb{Q}}(s;T)$.
\begin{prop}
  \label{prop:exp_omega}
  $\omega^{G/P}_{\mathbb{Q}}(s;T)$ is independent of the ordering of $\Delta_p$
  and is
  given by
  \begin{equation}
    \label{eq:exp_period}
    \begin{split}
      \omega^{G/P}_{\mathbb{Q}}(s;T)
      &=
      \sum_{\substack{w\in W\\\Delta_p\subset w^{-1}(\Delta\cup \Phi_-)}}
      e^{\langle w(\rho+s\lambda_p)-\rho,T\rangle}
      \biggl(
      \prod_{\alpha\in(w^{-1}\Delta)\setminus\Delta_p}
      \frac{1}{\langle \lambda_p,\alpha^\vee\rangle s+\Ht\alpha^\vee-1}
      \biggr)
      \\
      &\qquad\qquad\times
      \biggl(
      \prod_{\alpha\in\Phi_w\setminus\Delta_p}
      \xi(\langle \lambda_p,\alpha^\vee\rangle s+\Ht\alpha^\vee)
      \biggr)
      \biggl(
      \prod_{\alpha\in (-\Phi_w)}
      \frac{1}
      {\xi(\langle \lambda_p,\alpha^\vee\rangle s+\Ht\alpha^\vee)}
      \biggr),
    \end{split}
  \end{equation}
\end{prop}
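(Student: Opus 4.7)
The plan is to introduce coordinates $s_j=\langle\lambda-\rho,\alpha_j^\vee\rangle$, so that by the duality $\langle\alpha_i^\vee,\lambda_j\rangle=\delta_{ij}$ one has $\lambda=\rho+\sum_{j=1}^r s_j\lambda_j$, $s=s_p$, and \eqref{eq:def_omega} becomes the iterated residue in the $s_i$ at $s_i=0$ for $i\neq p$. First I would apply the functional equation $\xi(x+1)=\xi(-x)$ to the denominators of the $\xi$-ratios so that the $w$-summand of $\omega^G_{\mathbb Q}$ takes the form
\[
A_w=e^{\langle w\lambda-\rho,T\rangle}\prod_{\alpha\in\Delta}\frac{1}{\langle w\lambda-\rho,\alpha^\vee\rangle}\prod_{\alpha\in\Phi_w}\xi(\langle\lambda,\alpha^\vee\rangle)\prod_{\alpha\in-\Phi_w}\frac{1}{\xi(\langle\lambda,\alpha^\vee\rangle)},
\]
matching the shape of \eqref{eq:exp_period}. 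Writing $\gamma=w^{-1}\alpha$ for $\alpha\in\Delta$ and $\gamma^\vee=\sum_j n_j\alpha_j^\vee$, one checks that $\langle w\lambda-\rho,\alpha^\vee\rangle=\Ht\gamma^\vee-1+\sum_j n_j s_j$, a linear form that vanishes at generic $s$ on the residue locus only for $\gamma=\alpha_i$ with $i\neq p$, giving a factor $s_i$; similarly $\xi(\langle\lambda,\alpha^\vee\rangle)$ for $\alpha\in\Phi_w$ develops a pole at $s_i=0$, $i\neq p$, exactly when $\alpha=\alpha_i\in\Phi_w\cap\Delta_p$, while the $-\Phi_w$ product stays regular.

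Two conclusions should follow. First, for each $i\neq p$, $A_w$ has at most a simple pole at $s_i=0$, arising from one of the two mutually exclusive cases (A) $w\alpha_i\in\Delta$ (polynomial pole) or (B) $w\alpha_i\in\Phi_-$, i.e.\ $\alpha_i\in\Phi_w$ ($\xi$-pole); since the pole in each $s_i$ is simple and its location is independent of the other variables, the iterated residues commute and the ordering of $\Delta_p$ is immaterial. Second, the iterated residue of $A_w$ is nonzero iff every $i\neq p$ falls into case (A) or case (B), which is precisely the sum condition $\Delta_p\subset w^{-1}(\Delta\cup\Phi_-)$.

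To finish I would compute the residue for each contributing $w$: in case (A), taking $\Res_{s_i=0}$ strips the factor $s_i$ from the polynomial denominator; in case (B), it uses $\Res_{s_i=0}\xi(1+s_i)=\Res_{u=1}\xi(u)=1$ to strip the numerator $\xi(1+s_i)$, while the companion denominator factor $\xi(\langle\lambda,-\alpha_i^\vee\rangle)=\xi(-1-s_i)$ specialises to $\xi(-1)=\xi(2)$ at $s_i=0$. Setting $s_i=0$ for $i\neq p$ in the remaining factors, the exponential becomes $e^{\langle w(\rho+s\lambda_p)-\rho,T\rangle}$; the reindexing $\gamma=w^{-1}\alpha$ rewrites the surviving polynomial factors as $\prod_{\gamma\in(w^{-1}\Delta)\setminus\Delta_p}(\langle\lambda_p,\gamma^\vee\rangle s+\Ht\gamma^\vee-1)^{-1}$; and the $\xi$-data assembles into $\prod_{\alpha\in\Phi_w\setminus\Delta_p}\xi(\langle\lambda_p,\alpha^\vee\rangle s+\Ht\alpha^\vee)$ together with $\prod_{\alpha\in-\Phi_w}\xi(\langle\lambda_p,\alpha^\vee\rangle s+\Ht\alpha^\vee)^{-1}$, where the $\alpha=-\alpha_i$ term in the last product supplies precisely the $\xi(2)^{-1}$ produced by the case (B) residues. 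The main obstacle will be this final matching: verifying that the disparate pieces from the case (A) and case (B) residues repackage cleanly into the three products over $(w^{-1}\Delta)\setminus\Delta_p$, $\Phi_w\setminus\Delta_p$, and $-\Phi_w$, without leftover constants or missing factors.
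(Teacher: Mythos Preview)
Your proposal is correct and follows essentially the same approach as the paper: introduce the coordinates $s_k=\langle\lambda-\rho,\alpha_k^\vee\rangle$, identify for each $w$ the (at most simple) poles in each $s_i$ ($i\neq p$) as coming from either the rational factor when $\alpha_i\in w^{-1}\Delta$ or the $\xi$-numerator when $\alpha_i\in\Phi_w$, deduce the survival condition $\Delta_p\subset w^{-1}(\Delta\cup\Phi_-)$ and the order-independence, and compute the residue. The only organizational difference is that you apply the functional equation $\xi(x+1)=\xi(-x)$ to the denominators \emph{before} taking residues, whereas the paper keeps the form $\xi(\langle\lambda,\alpha^\vee\rangle+1)$ throughout the residue computation and only reindexes $\alpha\mapsto-\alpha$ via the functional equation in the very last line; correspondingly, your $\xi(2)^{-1}$ factors are seen as the $\alpha=-\alpha_i$ contributions already sitting in the $-\Phi_w$ product, while the paper first isolates them as a separate product $\prod_{\alpha\in\Phi_w\cap\Delta_p}\xi(2)^{-1}$ and then reabsorbs them.
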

From this proposition,
we see that $\omega^{G/P}_{\mathbb{Q}}(s;T)$ is a sum of rational functions of $\xi$
functions.
Weng's zeta
function $\xi^{G/P}_{\mathbb{Q};o}(s;T)$ is defined
by multiplying the minimal numbers of $\xi$ functions such
that all the denominators are cancelled.
To describe the minimal $\xi$ factor, we need the following:
for $(k,h)\in\mathbb{Z}^2$,
\begin{multline}
\label{eq:def_M_p}
  M_p(k,h)=\max_{\substack{w\in W\\ \Delta_p\subset w^{-1}(\Delta\cup \Phi_-)}}
\bigl(\sharp\{\alpha\in w^{-1}\Phi_-~|~\langle\lambda_p,\alpha^\vee\rangle=k,\Ht\alpha^\vee=h-1\}
\\
-
\sharp\{\alpha\in w^{-1}\Phi_-~|~\langle\lambda_p,\alpha^\vee\rangle=k,\Ht\alpha^\vee=h\}\bigr).
\end{multline}
\begin{theorem}
  \label{thm:exp_xi}
  \begin{equation}
    \label{eq:exp_xi}
    \xi^{G/P}_{\mathbb{Q};o}(s;T)
    =    \omega^{G/P}_{\mathbb{Q}}(s;T)
    \prod_{k=0}^\infty
    \prod_{h=2}^\infty
    \xi(ks+h)^{M_p(k,h)}.
  \end{equation}
\end{theorem}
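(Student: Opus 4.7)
The plan is to track, for each pair $(k,h)$, the multiplicity of $\xi(ks+h)$ in the denominator of each summand of \eqref{eq:exp_period}, and to identify the maximum of this multiplicity over admissible $w$ with $M_p(k,h)$.

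First I would apply the functional equation $\xi(1-z)=\xi(z)$ to each denominator factor $\xi(\langle\lambda_p,\alpha^\vee\rangle s+\Ht\alpha^\vee)$ with $\alpha\in -\Phi_w$. Writing $\alpha=-\beta$ for $\beta\in\Phi_w$, we have $\alpha^\vee=-\beta^\vee$ and $\Ht\alpha^\vee=-\Ht\beta^\vee$, so
\begin{equation*}
  \xi(\langle\lambda_p,\alpha^\vee\rangle s+\Ht\alpha^\vee)
  =\xi(\langle\lambda_p,\beta^\vee\rangle s+\Ht\beta^\vee+1).
\end{equation*}
Thus each $w$-summand has numerator $\xi$-factors $\xi(\langle\lambda_p,\alpha^\vee\rangle s+\Ht\alpha^\vee)$ for $\alpha\in\Phi_w\setminus\Delta_p$, and denominator $\xi$-factors $\xi(\langle\lambda_p,\beta^\vee\rangle s+\Ht\beta^\vee+1)$ for $\beta\in\Phi_w$. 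Since every $\beta\in\Phi_+$ satisfies $\Ht\beta^\vee\geq 1$ and $\langle\lambda_p,\beta^\vee\rangle\geq 0$, each denominator factor $\xi(ks+h)$ satisfies $k\geq 0$ and $h\geq 2$, consistent with the range of the product in \eqref{eq:exp_xi}.

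For fixed $(k,h)$ with $k\geq 0$ and $h\geq 2$, the net multiplicity of $\xi(ks+h)$ in the denominator of the $w$-summand is
\begin{equation*}
  N_w(k,h)=\sharp\{\beta\in\Phi_w\mid\langle\lambda_p,\beta^\vee\rangle=k,\,\Ht\beta^\vee=h-1\}
  -\sharp\{\alpha\in\Phi_w\setminus\Delta_p\mid\langle\lambda_p,\alpha^\vee\rangle=k,\,\Ht\alpha^\vee=h\}.
\end{equation*}
By the definition of $\xi^{G/P}_{\mathbb{Q};o}(s;T)$ as $\omega^{G/P}_{\mathbb{Q}}(s;T)$ multiplied by the minimal collection of $\xi$-functions that cancels all $\xi$-denominators of the summands, the exponent of $\xi(ks+h)$ in \eqref{eq:exp_xi} must equal $\max_w N_w(k,h)$, where $w$ ranges over the admissible set. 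This maximum is non-negative since $w=\Id$ is admissible with $\Phi_{\Id}=\emptyset$, so $N_{\Id}(k,h)=0$.

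To finish, I would identify $\max_w N_w(k,h)$ with $M_p(k,h)$ defined by \eqref{eq:def_M_p}. Since any root of positive coroot height lies in $\Phi_+$, the conditions $\Ht\alpha^\vee=h-1$ and $\Ht\alpha^\vee=h$ with $h\geq 2$ both force $\alpha\in\Phi_+$, so the index sets in \eqref{eq:def_M_p} with $\alpha\in w^{-1}\Phi_-$ coincide with the corresponding subsets of $\Phi_w=\Phi_+\cap w^{-1}\Phi_-$. Moreover every root in $\Delta_p$ has $\Ht\alpha^\vee=1$, so the $\Delta_p$-exclusion in $N_w(k,h)$ is automatic for $h\geq 2$. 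Therefore $N_w(k,h)$ equals the bracketed expression in \eqref{eq:def_M_p}, and the theorem follows. The principal obstacle is this combinatorial bookkeeping of the index sets and the verification that the $\Delta_p$-exclusion and the edge cases at heights $0$ and $1$ do not contribute $\xi$-denominator factors in the indicated range.
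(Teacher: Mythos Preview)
Your proposal is correct and follows essentially the same route as the paper. The paper packages the computation through the auxiliary quantities $H_{p,w}(s)$, $N_{p,w}(k,h)$, $\Tilde{M}_p(k,h)$, $F_p(s)$, $D_p(s)$ and then invokes Lemma~\ref{lm:exp_M_p}(1), but the content is identical to yours: rewrite the denominator factors via $\xi(1-z)=\xi(z)$ so that every $\xi$-factor is of the form $\xi(ks+h)$ with $k\geq 0$, observe that all denominator factors have $h\geq 2$ while the only numerator factor with $h=1$ is $\xi(s+1)$, compute the net denominator multiplicity $N_{p,w}(k,h-1)-N_{p,w}(k,h)$ for each admissible $w$, and use $w=\Id$ (which has $\Phi_{\Id}=\emptyset$) to see that the maximum is automatically non-negative so that the $\delta$ in the paper's $\Tilde{M}_p$ is superfluous and one recovers $M_p(k,h)$ directly.
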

Note that $M_p(k,h)\neq 0$ for only finitely many pairs $(k,h)$
and the infinite products in this theorem should be understood as finite products.

Now we have the following functional equations
for $\xi^{G/P}_{\mathbb{Q};o}(s;T)$.
\begin{theorem}[Functional Equations]
  \label{thm:main1}
  \begin{equation}
    \begin{split}
      \xi^{G/P}_{\mathbb{Q};o}(-c_p-s;\varpi_0T)&=\xi^{G/P}_{\mathbb{Q};o}(s;T)
      \\
      &=\xi^{G/Q}_{\mathbb{Q};o}(s;\varpi T),
    \end{split}
  \end{equation}
  where 
  $\varpi\in\Aut(\Gamma)$ with
  $q=\varpi p$.
\end{theorem}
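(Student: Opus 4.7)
My plan hinges on the explicit formula in Proposition~\ref{prop:exp_omega} and the multiplicity description in Theorem~\ref{thm:exp_xi}, together with the elementary but crucial identity
\[
2(\rho-\rho_p)=c_p\,\lambda_p.
\]
I would verify this first: since each simple reflection $\sigma_i$ with $i\neq p$ preserves the $\alpha_p$-coefficient of every root, $W_p$ stabilises $\Phi_+\setminus\Phi_{p+}$, so $\rho-\rho_p=\tfrac{1}{2}\sum_{\alpha\in\Phi_+\setminus\Phi_{p+}}\alpha$ lies in the one-dimensional $W_p$-invariant subspace $\mathbb{R}\lambda_p$. Pairing with $\alpha_p^\vee$ gives the coefficient $\tfrac12 c_p$, using $\langle\rho,\alpha_p^\vee\rangle=1$ and the definition of $c_p$. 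This identity is equivalent to $w_p\rho=\rho-c_p\lambda_p$ (since $w_p\rho_p=-\rho_p$ and $w_p(\rho-\rho_p)=\rho-\rho_p$), and it is the reason the constant $c_p$ governs the functional equation.

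For the equality $\xi^{G/P}_{\mathbb{Q};o}(s;T)=\xi^{G/Q}_{\mathbb{Q};o}(s;\varpi T)$, I substitute $w\mapsto\varpi w\varpi^{-1}$ in the sum \eqref{eq:exp_period} for $\omega^{G/Q}_{\mathbb{Q}}(s;\varpi T)$. Since $\varpi\in\Aut(\Gamma)$ is an isometry fixing $\Delta$, $\Phi_+$, $\rho$ and satisfies $\varpi\lambda_p=\lambda_q$, $\varpi\Delta_p=\Delta_q$, $\varpi W_p\varpi^{-1}=W_q$, every factor of \eqref{eq:exp_period} and the constraint $\Delta_q\subset(\varpi w\varpi^{-1})^{-1}(\Delta\cup\Phi_-)$ go over to their $P$-counterparts. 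The same substitution applied to the sets in \eqref{eq:def_M_p} shows $M_p(k,h)=M_q(k,h)$, matching the remaining $\xi$-factors.

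For the functional equation $\xi^{G/P}_{\mathbb{Q};o}(-c_p-s;\varpi_0T)=\xi^{G/P}_{\mathbb{Q};o}(s;T)$ I would introduce the involution $\iota(w)=w_0ww_p$ on $W$. It is involutive because $w_0^2=w_p^2=\Id$, and it preserves the admissible set $\{w\in W:\Delta_p\subset w^{-1}(\Delta\cup\Phi_-)\}$: using $w_p\Delta_p=-\Delta_p$ and the fact that $\varpi_0=-w_0$ permutes $\Delta$ and permutes $\Phi_-$, one has $\iota(w)\Delta_p=\varpi_0 w\Delta_p\subset\Delta\cup\Phi_-$. Under $(w,s,T)\mapsto(\iota(w),-c_p-s,\varpi_0 T)$ the exponential factor is matched by a direct computation from $\varpi_0\rho=\rho$ (so $w_0\rho=-\rho$), $w_p\rho=\rho-2\rho_p$, $w_p\lambda_p=\lambda_p$, and the identity above. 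For each root $\alpha$ the identity also yields
\[
\langle\lambda_p,(-w_p\alpha)^\vee\rangle s+\Ht(-w_p\alpha)^\vee=\langle\lambda_p,\alpha^\vee\rangle(-c_p-s)+\Ht\alpha^\vee,
\]
so that if one can set up a bijection between the three index sets $\Phi_w\setminus\Delta_p$, $-\Phi_w$, $(w^{-1}\Delta)\setminus\Delta_p$ for $w$ and their counterparts for $\iota(w)$ compatible with the informal rule $\alpha\mapsto -w_p\alpha$, then each $\xi$-factor and rational factor matches; the functional equation $\xi(1-x)=\xi(x)$ is invoked to convert $\xi$-values at negative arguments to the canonical form. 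Finally, the same bijection applied to the sets in \eqref{eq:def_M_p} should give a symmetry of $M_p$ (essentially $M_p(k,h)=M_p(-k,h-kc_p)$) which, combined with $\xi(1-x)=\xi(x)$, renders the product $\prod_{k,h}\xi(ks+h)^{M_p(k,h)}$ invariant under $s\mapsto -c_p-s$.

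The hardest step will be the combinatorial matching for the three product factors. The map $\alpha\mapsto-w_p\alpha$ interchanges $\Phi_+\setminus\Phi_{p+}$ with $\Phi_-\setminus\Phi_{p-}$ while fixing the signs of roots inside $\Phi_p$, so it does not send $\Phi_w$ to $\Phi_{\iota(w)}$ uniformly; the mismatch has to be absorbed by the rational factors indexed by $(w^{-1}\Delta)\setminus\Delta_p$ and by the extra $\xi$-factors of Theorem~\ref{thm:exp_xi}, and the bookkeeping is further complicated by the fact that a root can lie in $\Delta$ for $w$ but not for $\iota(w)$, or vice versa. Making the pairings precise is the content of Lemma~\ref{lm:sym_fg} referred to in the introduction, and establishing that lemma is where I expect the bulk of the technical work to concentrate.
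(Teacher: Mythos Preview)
Your overall architecture matches the paper's: the same involution $\iota(w)=w_0ww_p$, the same identity $w_p\rho=\rho-c_p\lambda_p$ (this is Lemma~\ref{lm:rho}), the same treatment of the $\Aut(\Gamma)$-equality, and the same matching of the exponential factor. Your claim that $-w_p$ carries $(w^{-1}\Delta)\setminus\Delta_p$ bijectively to $(\iota(w)^{-1}\Delta)\setminus\Delta_p$ is also correct and is exactly the paper's proof of the $f$-part of Lemma~\ref{lm:sym_fg}.

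Where your plan diverges, and where it stalls, is in the handling of the two $\xi$-products in \eqref{eq:exp_period}. You hope for bijections $\Phi_w\setminus\Delta_p\leftrightarrow\Phi_{\iota(w)}\setminus\Delta_p$ and $(-\Phi_w)\leftrightarrow(-\Phi_{\iota(w)})$ via $\alpha\mapsto-w_p\alpha$; as you yourself note, these do not exist, because $-w_p$ does not preserve $\Phi_+$. Your fallback (``the mismatch is absorbed by the extra $\xi$-factors of Theorem~\ref{thm:exp_xi}'') is not a plan: the $M_p$-product is $w$-independent, so it cannot repair a term-by-term mismatch that varies with $w$. What you are missing is the device the paper introduces precisely to dissolve this difficulty. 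One first multiplies $\omega^{G/P}_{\mathbb Q}$ by the $w$-independent factor
\[
F_p(s)=\prod_{\alpha\in\Phi_-}\xi(\langle\lambda_p,\alpha^\vee\rangle s+\Ht\alpha^\vee),
\]
and a short computation (Proposition~\ref{prop:explicit}) shows that in $Z_p:=F_p\,\omega^{G/P}_{\mathbb Q}$ the two $\xi$-products in each term coalesce into a \emph{single} product over $(w^{-1}\Phi_-)\setminus\Delta_p$. Now $-w_p(w^{-1}\Phi_-)=w_pw^{-1}w_0\Phi_-=\iota(w)^{-1}\Phi_-$, so this index set \emph{does} transform cleanly, and Lemma~\ref{lm:sym_fg} (hence the functional equation for $Z_p$) is immediate. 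The role of the extra $\xi$-factors is then isolated in $D_p(s)=F_p(s)\big/\prod\xi(ks+h)^{M_p(k,h)}$, and one checks $D_p(-c_p-s)=D_p(s)$ separately (Lemma~\ref{lm:DD}); the relevant symmetry is not $M_p(k,h)=M_p(-k,h-kc_p)$ as you guessed, but rather
\[
N_p(k,kc_p-h)-M_p(k,kc_p-h+1)=N_p(k,h-1)-M_p(k,h),
\]
which follows from Lemmas~\ref{lm:N} and \ref{lm:Nw}. Since $\xi^{G/P}_{\mathbb Q;o}=Z_p/D_p$, the theorem follows. In short: your ``hardest step'' disappears once you clear denominators with $F_p$ before attempting the bijection.
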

From the view point of the classical symmetry $s\leftrightarrow 1-s$,
we arrive at the following normalization and functional equations,
which immediately follow from Theorem \ref{thm:main1}.
\begin{definition}[Normalized Weng's zeta function]
  \begin{equation}
    \xi^{G/P}_{\mathbb{Q}}(s)=
    \xi^{G/P}_{\mathbb{Q};o}(s-(c_p+1)/2;0).
  \end{equation}
\end{definition}
\begin{theorem}[Functional Equations]
  \label{thm:main2}
  \begin{equation}
    \begin{split}
      \xi^{G/P}_{\mathbb{Q}}(1-s)&=\xi^{G/P}_{\mathbb{Q}}(s)\\
      &=\xi^{G/Q}_{\mathbb{Q}}(s),
    \end{split}
  \end{equation}
  where $q\in\Aut(\Gamma)p$.
\end{theorem}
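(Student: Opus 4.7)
The plan is to derive Theorem \ref{thm:main2} directly from Theorem \ref{thm:main1} by specializing to $T=0$ and performing an affine change of the spectral variable. The first step is to observe that both $\varpi_0$ and any $\varpi\in\Aut(\Gamma)$ act linearly on $V$ and therefore fix the origin, so setting $T=0$ in the two equalities of Theorem \ref{thm:main1} yields
\[
\xi^{G/P}_{\mathbb{Q};o}(-c_p-s;0)=\xi^{G/P}_{\mathbb{Q};o}(s;0)=\xi^{G/Q}_{\mathbb{Q};o}(s;0).
\]

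Next, I would substitute $s\mapsto s-(c_p+1)/2$ into the first of these specialized relations. A direct arithmetic check gives $-c_p-(s-(c_p+1)/2)=(1-s)-(c_p+1)/2$, so by the definition of the normalized Weng zeta function this substitution produces exactly $\xi^{G/P}_{\mathbb{Q}}(1-s)=\xi^{G/P}_{\mathbb{Q}}(s)$. For the second identity I would invoke the equality $c_p=c_q$ recorded in the preamble for $q\in\Aut(\Gamma)p$, so that the normalization shifts used in the definitions of $\xi^{G/P}_{\mathbb{Q}}$ and $\xi^{G/Q}_{\mathbb{Q}}$ coincide; the same substitution $s\mapsto s-(c_p+1)/2$ then converts $\xi^{G/P}_{\mathbb{Q};o}(s;0)=\xi^{G/Q}_{\mathbb{Q};o}(s;0)$ into $\xi^{G/P}_{\mathbb{Q}}(s)=\xi^{G/Q}_{\mathbb{Q}}(s)$.

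There is no substantive obstacle, since all the content has been absorbed into Theorem \ref{thm:main1}. The role of the shift $(c_p+1)/2$ in the definition of $\xi^{G/P}_{\mathbb{Q}}$ is precisely to move the symmetry center of the unnormalized functional equation from $s=-c_p/2$ to the classical center $s=1/2$ of $s\leftrightarrow 1-s$. The only small items to verify are the arithmetic identity above and the invariance $c_p=c_q$ under $\Aut(\Gamma)$, both of which are immediate.
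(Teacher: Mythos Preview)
Your proposal is correct and is exactly the argument the paper has in mind: the text states that Theorem \ref{thm:main2} ``immediately follow[s] from Theorem \ref{thm:main1},'' and your specialization $T=0$ together with the shift $s\mapsto s-(c_p+1)/2$ and the invariance $c_p=c_q$ are precisely the omitted details.
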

\begin{conjecture}[Riemann Hypothesis \cite{Weng07,Weng10}]
  \label{conj:RH} 
  All zeros of the zeta function
  $\xi^{G/P}_{\mathbb{Q}}(s)$ lie on the central line $\Re s=\dfrac{1}{2}$.
\end{conjecture}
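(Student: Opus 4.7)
The final statement is the Riemann Hypothesis for Weng's zeta functions, a well-known open problem, so any proof plan is necessarily speculative; the best one can hope to do is extend the strategies that succeeded for the small-rank families treated in \cite{LS06,Suz07,Suz08,SW08,Ki10}. The overall plan is to exploit the explicit form in Theorem \ref{thm:exp_xi} together with the functional equation in Theorem \ref{thm:main2} in order to recast the problem as the real-rootedness of a concrete real-analytic function on the real line, and then to estimate that function by separating a dominant term from a controlled perturbation.

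First I would substitute $s=\tfrac{1}{2}+it$ and, for each factor $\xi(ks+h)$ with $k\geq 1$ appearing in $\omega^{G/P}_{\mathbb{Q}}(s;0)$ via Proposition \ref{prop:exp_omega}, extract an explicit unitary phase $e^{i\theta_{k,h}(t)}$ so that the resulting factor is real-valued in $t$. The functional equation $\xi^{G/P}_{\mathbb{Q}}(1-s)=\xi^{G/P}_{\mathbb{Q}}(s)$ then guarantees that, after pulling out a common global phase, the Weyl-indexed sum in \eqref{eq:exp_period} becomes a real-valued entire function $F(t)$ whose real zeros are exactly the zeros of $\xi^{G/P}_{\mathbb{Q}}$ on the critical line, reducing the Riemann Hypothesis to the real-rootedness of $F$. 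Next I would group the summands according to the involution $w\mapsto w_0 w\varpi_0^{-1}$ that underlies the functional equation (as in Lemma \ref{lm:sym_fg}); paired terms contribute complex conjugates off the critical line and reinforce on the line. The hope is to isolate a dominant term whose phase behaves like an analogue of $\cos(t\log N)$ with $N$ an explicit product of ratios $\xi(h)/\xi(h+1)$, while controlling the remaining terms by a Hermite--Biehler-type argument or a Phragm\'en--Lindel\"of estimate in strips parallel to the critical line.

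The principal obstacle, and the reason the conjecture remains open in general, is the final step: showing that the subleading Weyl summands cannot overwhelm the real-rootedness of the leading term in arbitrary rank. In low rank this can be verified case by case, but the number of Weyl terms grows rapidly with $|W|$ and no uniform domination estimate is currently known. A realistic intermediate target would be to push the argument through for classical series $(G,P)$ with $p=1$ by induction on the rank, using parabolic restriction to relate $\omega^{G/P}_{\mathbb{Q}}$ to analogous periods on Levi subgroups; the $\Aut(\Gamma)$-symmetries established by Theorem \ref{thm:main2} should simplify both the bookkeeping and the inductive step, but the core analytic difficulty of controlling the perturbative summands would remain the real bottleneck.
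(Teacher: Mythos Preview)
The paper does not prove this statement at all: it is stated as Conjecture~\ref{conj:RH}, and the paper explicitly notes that it has only been verified in the low-rank cases $A_1$, $A_2$, $B_2$, $G_2$ by the cited works \cite{LS06,Suz07,Suz08,SW08}. So there is no ``paper's own proof'' to compare against; you correctly identify the statement as open and your write-up is appropriately framed as a speculative strategy rather than a proof.

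Your outline is a reasonable extrapolation of the low-rank arguments, and its honest assessment of the bottleneck (no uniform domination estimate for the subleading Weyl summands as $|W|$ grows) is accurate. One small correction: the involution on $W$ that underlies the functional equation in this paper is $\iota:w\mapsto w_0\,w\,w_p$ (see the end of Section~\ref{sec:example} and Lemma~\ref{lm:sym_fg}), not $w\mapsto w_0\,w\,\varpi_0^{-1}$; the element $w_p$ is the longest element of the parabolic Weyl group $W_p$, which in general differs from the Dynkin automorphism $\varpi_0$. If you pursue the pairing-of-terms idea, it is this $\iota$ that you should use.
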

In the cases $A_1$, $A_2$, $B_2$ and $G_2$,
this conjecture was already confirmed
in \cite{LS06,Suz07,Suz08,SW08}. 

\begin{remark}
  In \cite{KHS10}, a weak version of Conjecture \ref{conj:RH} is proved in arbitrary root systems.
  Furthermore
  in \cite[Corollary 8.7]{KHS10}, a case-by-case investigation shows that
  the maximum in the definition 
  \eqref{eq:def_M_p}
  is attained by the longest element $w_0$, and hence we have
  \begin{equation}
    M_p(k,h)
    =
    \sharp\{\alpha\in \Phi_+~|~\langle\lambda_p,\alpha^\vee\rangle=k,\Ht\alpha^\vee=h-1\}
    -
    \sharp\{\alpha\in\Phi_+~|~\langle\lambda_p,\alpha^\vee\rangle=k,\Ht\alpha^\vee=h\}.
  \end{equation}
  In particular,
  \begin{equation}
    M_p(0,h)
    =
    \sharp\{\alpha\in \Phi_{p+}~|~\Ht\alpha^\vee=h-1\}
    -
    \sharp\{\alpha\in\Phi_{p+}~|~\Ht\alpha^\vee=h\}.
  \end{equation}
  Thus we obtain
  \begin{equation}
    \xi^{G/P}_{\mathbb{Q};o}(s;T)
    =
    \omega^{G/P}_{\mathbb{Q}}(s;T)
    \prod_{j=1}^{r-1}\xi(d_j)
    \prod_{k=1}^\infty
    \prod_{h=2}^\infty
    \xi(ks+h)^{M_p(k,h)},
  \end{equation}
where $d_j$ ($1\leq j\leq r-1$) are the 
degrees of the Weyl group $W_p$
(see \cite{Hum} for the details).
\end{remark}

\section{Example}
\label{sec:example}
To explain the idea and
to clarify the roles of the symbols appearing in this paper,
we give an example in the case of type $A_2$ (i.e.~$G=\mathrm{SL}(3)$) and $p=1$.

Let
$\Delta=\{\alpha_1,\alpha_2\}$ be a fundamental system
and
$\Phi_+=\{\alpha_1,\alpha_2,\alpha_1+\alpha_2\}$, the corresponding positive system,
 and $\rho=\alpha_1+\alpha_2$.
Let $\{\lambda_1,\lambda_2\}$ be the fundamental weights.
The Weyl group is given by
\begin{equation}
W=\{\Id,\sigma_1,\sigma_2,\sigma_1\sigma_2,\sigma_2\sigma_1,\sigma_1\sigma_2\sigma_1=\sigma_2\sigma_1\sigma_2=w_0\},
\end{equation}
where $w_0$ is the longest element.
We have $\Phi_{1+}=\{\alpha_2\}$ and the longest element $w_1=\sigma_2$ of the Weyl group of $\Phi_1$.

Put $\lambda=\rho+s_1\lambda_1+s_2\lambda_2$.
\begin{center}
  \begin{tabular}{cccc}
    & $w^{-1}\Delta$ & $\Phi_w=\Phi_+\cap w^{-1}\Phi_-$ & $w_0ww_1$\\
    \hline
    $\Id$ & $\{\alpha_1,\alpha_2\}$ & $\emptyset$ & $\sigma_2\sigma_1$\\
    $\sigma_1$ & $\{-\alpha_1,\alpha_1+\alpha_2\}$ & $\{\alpha_1\}$ & $\sigma_1$\\
    $\sigma_2$ & $\{\alpha_1+\alpha_2,-\alpha_2\}$ & $\{\alpha_2\}$ & $\sigma_1\sigma_2\sigma_1$\\
    $\sigma_2\sigma_1$ & $\{\alpha_2,-\alpha_1-\alpha_2\}$ & $\{\alpha_1,\alpha_1+\alpha_2\}$ & $\Id$ \\
    $\sigma_1\sigma_2$ & $\{-\alpha_1-\alpha_2,\alpha_1\}$ & $\{\alpha_2,\alpha_1+\alpha_2\}$ & $\sigma_1\sigma_2$ \\
    $\sigma_1\sigma_2\sigma_1=w_0$ & $\{-\alpha_1,-\alpha_2\}$ & $\{\alpha_1,\alpha_2,\alpha_1+\alpha_2\}=\Phi_+$ & $\sigma_2$
    \\
    \hline
  \end{tabular}
\end{center}
From the above table, we obtain
\begin{multline}
  \omega^G_{\mathbb{Q}}(\lambda)=
\frac{1}{s_1s_2}
+
\frac{1}{(-s_1-2)(s_1+s_2+1)}\frac{\xi(s_1+1)}{\xi(s_1+2)}
+
\frac{1}{(s_1+s_2+1)(-s_2-2)}\frac{\xi(s_2+1)}{\xi(s_2+2)}
\\
+
\frac{1}{s_2(-s_1-s_2-3)}\frac{\xi(s_1+1)\xi(s_1+s_2+2)}{\xi(s_1+2)\xi(s_1+s_2+3)}
+
\frac{1}{(-s_1-s_2-3)s_1}\frac{\xi(s_2+1)\xi(s_1+s_2+2)}{\xi(s_2+2)\xi(s_1+s_2+3)}
\\
+
\frac{1}{(-s_2-2)(-s_1-2)}\frac{\xi(s_1+1)\xi(s_2+1)\xi(s_1+s_2+2)}{\xi(s_1+2)\xi(s_2+2)\xi(s_1+s_2+3)}.
\end{multline}
By putting $s_1=s$ and taking the residue at $s_2=0$, we obtain
\begin{multline}
\label{eq:omega_1}
  \omega^{G/P}_{\mathbb{Q}}(s)=\Res_{s_2=0}\omega^G_{\mathbb{Q}}(\lambda)
=
\frac{1}{s}
+
0
+
\frac{1}{(s+1)(-2)}\frac{1}{\xi(2)}
\\
+
\frac{1}{(-s-3)}\frac{\xi(s+1)\xi(s+2)}{\xi(s+2)\xi(s+3)}
+
\frac{1}{(-s-3)s}\frac{\xi(s+2)}{\xi(2)\xi(s+3)}
\\
+
\frac{1}{(-2)(-s-2)}\frac{\xi(s+1)\xi(s+2)}{\xi(s+2)\xi(2)\xi(s+3)}.
\end{multline}
By multiplying the formal common $\xi$ factor 
\begin{equation}
 F_1(s)=\xi(-s-1)\xi(-1)\xi(-s-2)=\xi(s+2)\xi(2)\xi(s+3),  
\end{equation}
we define
\begin{multline}
  Z_1(s)=F_1(s)\omega^{G/P}_{\mathbb{Q}}(s)
\\
=
\frac{1}{s}\xi(s+2)\xi(2)\xi(s+3)
+
0
+
\frac{1}{(s+1)(-2)}\xi(s+2)\xi(s+3)
\\
+
\frac{1}{(-s-3)}\xi{2}\xi(s+1)\xi(s+2)
+
\frac{1}{(-s-3)s}\xi(s+2)^2
\\
+
\frac{1}{(-2)(-s-2)}\xi(s+1)\xi(s+2).
\end{multline}
It can be directly checked that
\begin{equation}
\label{eq:sym_Z}
  Z_1(-3-s)=Z_1(s),
\end{equation}
where the term corresponding to $w$ is exchanged for that
corresponding to $w_0ww_1$.  Note that $2\rho_1=\alpha_2$ and
\begin{equation}
  c_1=2\langle\lambda_1-\rho_1,\alpha_1^\vee\rangle=3.
\end{equation}

We have shown that $Z_1(s)$ satisfies the functional equation.  It is,
however, not Weng's zeta function because $F_1(s)$ is not the minimal
$\xi$ factor.  To obtain Weng's zeta function, we need the minimal
$\xi$ factor such that all the true denominators are cancelled in
$\omega^{G/P}_{\mathbb{Q}}(s)$. It is read off from \eqref{eq:omega_1} as
\begin{equation}
\label{eq:fac_A1}
  \xi(2)\xi(s+3)=\frac{F_1(s)}{D_1(s)},
\end{equation}
where $D_1(s)=\xi(s+2)$, which itself satisfies the functional equation
\begin{equation}
\label{eq:sym_D}
  D_1(-3-s)=D_1(s).
\end{equation}

Due to the symmetries \eqref{eq:sym_Z} and \eqref{eq:sym_D},
we conclude that Weng's zeta function
\begin{equation}
\xi^{G/P}_{\mathbb{Q};o}(s)=\Bigl(\frac{F_1(s)}{D_1(s)}\Bigr)
\omega^{G/P}_{\mathbb{Q}}(s)
=\frac{Z_1(s)}{D_1(s)}
\end{equation}
satisfies the functional equation
\begin{equation}
  \xi^{G/P}_{\mathbb{Q};o}(-3-s)=\xi^{G/P}_{\mathbb{Q};o}(s).
\end{equation}
Note that in \eqref{eq:fac_A1}, we see that $\xi(2)=\xi(d_1)$, where $d_1=2$ is the degree of the Weyl group of type $A_1$.

In general cases, this procedure works well and we prove the
functional equations in the following sections in this strategy.
As we remarked in the introduction,
we see that the map $\iota:W\to W$ defined by $w\mapsto
w_0ww_p$ plays an important role
in \eqref{eq:sym_Z};
$\iota$ is an involution, namely $\iota^2=\Id$, and governs the
functional equations at the level of the Weyl group.

\section{Preliminaries}
\label{sec:preliminaries}
In this section, we prove some statements about root systems
which is used in the proof of the functional equations.
\begin{lemma}
\label{lm:rho}
  \begin{equation}
      c_p\lambda_p-w_p\rho=\rho.
  \end{equation}
\end{lemma}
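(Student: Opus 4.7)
The plan is to decompose $\rho$ into pieces that behave well under $w_p$, then identify the multiple of $\lambda_p$ that appears.

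First I would write $\rho = \rho_p + \rho^p$ where $\rho^p := \rho - \rho_p = \frac{1}{2}\sum_{\alpha\in\Phi_+\setminus\Phi_{p+}}\alpha$. On $\rho_p$ the longest element $w_p$ of $W_p$ acts by $-1$, because $w_p\Phi_{p+}=\Phi_{p-}$, which is the standard fact that the longest element sends the half-sum of positive roots to its negative (applied to the subsystem $\Phi_p$). So $w_p\rho_p=-\rho_p$.

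Next I would show $w_p\rho^p=\rho^p$ by verifying that $w_p$ permutes the set $\Phi_+\setminus\Phi_{p+}$ among itself. The key observation is that every simple reflection $\sigma_j$ with $j\neq p$ fixes the $\alpha_p$-coefficient of any root (since $\sigma_j(\alpha_i)=\alpha_i-\langle\alpha_i,\alpha_j^\vee\rangle\alpha_j$ only alters the $\alpha_j$-term), and the set $\Phi_+\setminus\Phi_{p+}$ is precisely the positive roots whose $\alpha_p$-coefficient is $\geq 1$. Since $w_p\in W_p$ is a word in such $\sigma_j$, it permutes this set bijectively, so the sum $2\rho^p$ is $w_p$-invariant. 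Combining, $w_p\rho=-\rho_p+\rho^p=\rho-2\rho_p$.

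It remains to identify $2\rho^p$ with $c_p\lambda_p$. Since $2\rho^p$ is fixed by all generators of $W_p$, it lies in the $W_p$-fixed subspace of $V$. That fixed subspace is the orthogonal complement of the span of $\Phi_p$, which is the line $\mathbb{R}\lambda_p$ (as $\Delta_p$ together with $\lambda_p$ spans $V$, and $\langle\lambda_p,\alpha_j\rangle=0$ for $j\neq p$). Hence $2\rho^p=c\,\lambda_p$ for some scalar $c$, which I read off by pairing with $\alpha_p^\vee$ and using $\langle\lambda_p,\alpha_p^\vee\rangle=1$:
\begin{equation*}
  c=2\langle\rho^p,\alpha_p^\vee\rangle=2\langle\rho-\rho_p,\alpha_p^\vee\rangle=2\langle\lambda_p-\rho_p,\alpha_p^\vee\rangle=c_p,
\end{equation*}
where the third equality uses $\langle\rho,\alpha_p^\vee\rangle=1=\langle\lambda_p,\alpha_p^\vee\rangle$ (from $\rho=\sum_j\lambda_j$). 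Finally, $c_p\lambda_p-w_p\rho=2\rho^p-(\rho-2\rho_p)=(2\rho-2\rho_p)-\rho+2\rho_p=\rho$.

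The only non-routine step is the $W_p$-invariance of $\rho^p$; everything else is bookkeeping. Once one sees that $W_p$ preserves the $\alpha_p$-coefficient of each root (hence the complement $\Phi_+\setminus\Phi_{p+}$ setwise), the lemma falls out from decomposing $\rho$ and identifying the $W_p$-fixed line.
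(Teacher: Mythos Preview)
Your proof is correct and follows essentially the same approach as the paper: both establish $w_p\rho=\rho-2\rho_p$ via the observation that $W_p$ preserves the $\alpha_p$-coefficient (the paper phrases this as $\Phi_{w_p}=\Phi_{p+}$, you as $W_p$-invariance of $\rho^p$), and both then identify $2(\rho-\rho_p)$ as $c_p\lambda_p$. The only cosmetic difference is that the paper reads off $\langle\rho_p,\alpha_k^\vee\rangle=1$ for $k\neq p$ directly to expand $\rho_p$ in the $\lambda_i$-basis, whereas you invoke the $W_p$-fixed line $\mathbb{R}\lambda_p$; these are two ways of saying the same thing.
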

\begin{proof}
For $\alpha\in\Phi_{p+}$, we have $w_p\alpha\in \Phi_{p-}\subset\Phi_-$ by the definition of $w_p$.
For $\alpha\in\Phi_+\setminus\Phi_{p+}$, we have $w_p\alpha\in\Phi_+$
since $\alpha$ is of the form $a_p\alpha_p+\cdots$ with $a_p>0$
and $w_p\alpha=a_p\alpha_p+\cdots$ remains positive. Hence
we obtain
\begin{equation}
  \Phi_{w_p}=\Phi_+\cap w_p^{-1}\Phi_-=\Phi_+\cap(-w_p\Phi_+)=\Phi_{p+}
\end{equation}
and
\begin{equation}
\label{eq:rho_1}
  w_p\rho=\rho-\sum_{\alpha\in\Phi_{w_p}}\alpha
=\rho-\sum_{\alpha\in\Phi_{p+}}\alpha
=\rho-2\rho_p.
\end{equation}
By the property
$\sigma_k\Phi_{p+}=(\Phi_{p+}\setminus\{\alpha_k\})\cup\{-\alpha_k\}$
for $k\neq p$, we have
 \begin{equation}
   \sigma_k\rho_p=\rho_p-\alpha_k=\rho_p-\langle\rho_p,\alpha_k^\vee\rangle\alpha_k,
 \end{equation}
which implies $\langle\rho_p,\alpha_k^\vee\rangle=1$.
Therefore
\begin{equation}
\label{eq:rho_2}
  \rho_p
=\sum_{k=1}^r\langle\rho_p,\alpha_k^\vee\rangle\lambda_k
=\sum_{k\neq p}\lambda_k+\langle\rho_p,\alpha_p^\vee\rangle\lambda_p
  =\rho+\langle\rho_p-\lambda_p,\alpha_p^\vee\rangle\lambda_p.
\end{equation}
Combining \eqref{eq:rho_1} and \eqref{eq:rho_2},
we have
\begin{equation}
  c_p\lambda_p-w_p\rho=\rho+(c_p+2\langle\rho_p-\lambda_p,\alpha_p^\vee\rangle)\lambda_p=\rho.
\end{equation}
\end{proof}

\begin{lemma}
\label{lm:bij_w}
\begin{enumerate}
\item 
  For $w\in W$,
  $\Delta_p\subset w^{-1}(\Delta\cup \Phi_-)$ if and only if
  $\Delta_p\subset w_pw^{-1}w_0(\Delta\cup \Phi_-)$.
\item
  For $w\in W$ and $\varpi\in\Aut(\Gamma)$ with $q=\varpi p$,
  $\Delta_p\subset w^{-1}(\Delta\cup \Phi_-)$ if and only if
  $\Delta_q\subset \varpi w^{-1}\varpi^{-1}(\Delta\cup \Phi_-)$.
\end{enumerate}
\end{lemma}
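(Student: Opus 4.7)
The plan is to unpack both equivalences down to conditions on individual simple roots in $\Delta_p$ (or $\Delta_q$), and then use the structural properties of $w_0$, $w_p$, and $\varpi$ to translate the two sides into each other.

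For part (1), I would first invert: the condition $\Delta_p\subset(w_0ww_p)^{-1}(\Delta\cup\Phi_-)$ coincides with $\Delta_p\subset w_pw^{-1}w_0(\Delta\cup\Phi_-)$ since $w_0^2=w_p^2=\Id$. Using $w_0\Delta=-\Delta$ and $w_0\Phi_-=\Phi_+$, this becomes $\Delta_p\subset w_pw^{-1}(-\Delta\cup\Phi_+)$. For each $\beta\in\Delta_p$, I would write $w_p\beta=-\beta'$ with $\beta'\in\Delta_p$ using $w_p\Delta_p=-\Delta_p$; then membership $-\beta'\in w^{-1}(-\Delta\cup\Phi_+)$ is equivalent, by negation, to $\beta'\in w^{-1}(\Delta\cup\Phi_-)$. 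Since $\beta\mapsto -w_p\beta=\beta'$ is an involution on $\Delta_p$, quantifying over $\beta\in\Delta_p$ is the same as quantifying over $\beta'\in\Delta_p$, so the two conditions are equivalent.

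For part (2), the element corresponding to the right-hand condition is $w'=\varpi w\varpi^{-1}$, so $(w')^{-1}=\varpi w^{-1}\varpi^{-1}$. Since $\varpi\in\Aut(\Gamma)$ sends $\alpha_p$ to $\alpha_q$, it restricts to a bijection $\Delta_p\to\Delta_q$, and since $\varpi$ preserves both $\Delta$ and $\Phi_-$, conjugation by $\varpi$ preserves the set $\Delta\cup\Phi_-$. Writing $\beta=\varpi\gamma$ with $\gamma\in\Delta_p$, the membership $\varpi w^{-1}\varpi^{-1}\beta=\varpi(w^{-1}\gamma)\in\Delta\cup\Phi_-$ is then equivalent to $w^{-1}\gamma\in\Delta\cup\Phi_-$. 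As $\gamma=\varpi^{-1}\beta$ runs over $\Delta_p$ when $\beta$ runs over $\Delta_q$, the two universal statements match.

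Both parts reduce to bookkeeping about which subsets are stable under the relevant maps, so I do not foresee a serious obstacle. The one subtle point in part (1) is that $w_0(\Delta\cup\Phi_-)=-\Delta\cup\Phi_+$ is \emph{not} equal to $\Delta\cup\Phi_-$; the intermediate step $w_p\Delta_p=-\Delta_p$ is what allows one to convert membership in $-\Delta\cup\Phi_+$ back into membership in $\Delta\cup\Phi_-$, and this works precisely because the involution $\beta\mapsto -w_p\beta$ preserves $\Delta_p$ setwise.
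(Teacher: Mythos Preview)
Your approach is exactly the paper's: part (1) hinges on $w_p\Delta_p=-\Delta_p$ and $w_0(\Delta\cup\Phi_-)=-(\Delta\cup\Phi_-)$, so the two inclusions are negatives of one another; part (2) hinges on $\varpi\Delta_p=\Delta_q$ together with $\varpi\Delta=\Delta$ and $\varpi\Phi_-=\Phi_-$. Part (1) is carried out correctly.

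In part (2) there is a direction slip. With $w'=\varpi w\varpi^{-1}$, the condition $\beta\in(w')^{-1}(\Delta\cup\Phi_-)$ means $w'\beta\in\Delta\cup\Phi_-$, not $(w')^{-1}\beta\in\Delta\cup\Phi_-$. Writing $\beta=\varpi\gamma$ one gets $w'\beta=\varpi w\varpi^{-1}(\varpi\gamma)=\varpi(w\gamma)$, and since $\varpi$ preserves $\Delta\cup\Phi_-$ this is equivalent to $w\gamma\in\Delta\cup\Phi_-$, i.e.\ $\gamma\in w^{-1}(\Delta\cup\Phi_-)$, which is exactly the left-hand condition. Your computation instead yields $w^{-1}\gamma\in\Delta\cup\Phi_-$, which is the condition for $w^{-1}$ rather than $w$ and does not match. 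The fix is a one-symbol change ($w$ for $w^{-1}$), and with it your argument goes through verbatim; no new idea is needed.
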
  
\begin{proof}
  \begin{enumerate}
  \item 
  We see that $\Delta_p\subset w_pw^{-1}w_0(\Delta\cup \Phi_-)$ is equivalent to
  $-\Delta_p\subset w^{-1}(-\Delta\cup \Phi_+)$
  and hence to
  $\Delta_p\subset w^{-1}(\Delta\cup \Phi_-)$.
  \item It follows from $\varpi\Delta_p=\Delta_q$,
    $\varpi\Delta=\Delta$ and $\varpi\Phi_-=\Phi_-$.
  \end{enumerate}
\end{proof}

For $w\in W$ and $(k,h)\in\mathbb{Z}^2$, 
let 
\begin{equation}
  \begin{split}
    N_{p,w}(k,h)&=\sharp\{\alpha\in w^{-1}\Phi_-~|~\langle\lambda_p,\alpha^\vee\rangle=k,\Ht\alpha^\vee=h\},
    \\
    N_p(k,h)&=
    \sharp\{\alpha\in\Phi~|~\langle\lambda_p,\alpha^\vee\rangle=k,\Ht\alpha^\vee=h\}.
  \end{split}
\end{equation}
We note that $N_{p,w}(k,h)\neq 0$ for finite numbers of pairs $(k,h)\in\mathbb{Z}^2$
and that for $(k,h)\in\mathbb{Z}^2$ with $k\geq1$ or $h\geq1$,
\begin{equation}
  \begin{split}
N_{p,w}(k,h)&=\sharp\{\alpha\in \Phi_+\cap w^{-1}\Phi_-~|~\langle\lambda_p,\alpha^\vee\rangle=k,\Ht\alpha^\vee=h\},
\\
N_p(k,h)&=
\sharp\{\alpha\in\Phi_+~|~\langle\lambda_p,\alpha^\vee\rangle=k,\Ht\alpha^\vee=h\}
\end{split}
\end{equation}
because $\alpha\in\Phi$ is either $\alpha\in\Phi_+$ or $\alpha\in\Phi_-$.

Consider the character of the dual Lie algebra ignoring the Cartan
subalgebra
\begin{equation}
X(\nu)=\sum_{\alpha\in\Phi}e^{\alpha^\vee}(\nu)
\end{equation}
for $\nu\in V$, where
\begin{equation}
e^{\alpha^\vee}(\nu)=e^{\langle\nu,\alpha^\vee\rangle}
\end{equation}
as usual.  Then
\begin{equation}
\label{eq:def_X}
  X(t\lambda_p+\rho)
  =
  \sum_{\alpha\in\Phi}e^{\langle\lambda_p,\alpha^\vee\rangle t+\Ht\alpha^\vee}
  =
  \sum_{k=-\infty}^\infty\sum_{h=-\infty}^\infty
  N_p(k,h)e^{kt+h}.
\end{equation}
Note that for $\nu\in V$ and $w\in\Aut(\Phi)$,
\begin{equation}
  X(\nu)=X(w\nu).
\end{equation}
\begin{lemma}
  \label{lm:N}
  \begin{enumerate}
  \item 
    For $(k,h)\in\mathbb{Z}^2$,
    \begin{equation}
      \label{eq:NN1}
      N_p(k,kc_p-h)=N_p(k,h).
    \end{equation}
  \item
    For $(k,h)\in\mathbb{Z}^2$ and $q\in\Aut(\Gamma)p$,
    \begin{equation}
      \label{eq:NN2}
      N_p(k,h)=N_q(k,h).
    \end{equation}
  \end{enumerate}
\end{lemma}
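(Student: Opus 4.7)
The plan is to derive both identities directly from the invariance $X(\nu)=X(w\nu)$ of the generating function in \eqref{eq:def_X}, by applying a suitable automorphism of $\Phi$ to the argument $t\lambda_p+\rho$ and comparing coefficients of $e^{kt+h}$.

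For part (1), I would apply $w_p\in W$. Since $W_p$ acts on the orthogonal complement of $\lambda_p$ (because $\Phi_p$ is the root system normal to $\lambda_p$), we have $w_p\lambda_p=\lambda_p$, and Lemma \ref{lm:rho} gives $w_p\rho=c_p\lambda_p-\rho$. Hence
\begin{equation*}
  X(t\lambda_p+\rho)=X\bigl(w_p(t\lambda_p+\rho)\bigr)=X\bigl((t+c_p)\lambda_p-\rho\bigr).
\end{equation*}
Expanding the right-hand side by the defining sum in \eqref{eq:def_X} yields $\sum_{k,h}N_p(k,h)\,e^{kt+kc_p-h}$; reindexing $h\mapsto kc_p-h$ and comparing with $X(t\lambda_p+\rho)=\sum_{k,h}N_p(k,h)\,e^{kt+h}$ forces the coefficientwise identity \eqref{eq:NN1}.

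For part (2), I would apply $\varpi\in\Aut(\Gamma)$ with $q=\varpi p$. Since $\varpi$ permutes the simple roots and the fundamental weights compatibly, $\varpi\lambda_p=\lambda_q$ and $\varpi\rho=\rho$ (the latter because $\rho=\sum_j\lambda_j$ and $\varpi$ permutes the $\lambda_j$). Therefore
\begin{equation*}
  X(t\lambda_p+\rho)=X\bigl(\varpi(t\lambda_p+\rho)\bigr)=X(t\lambda_q+\rho)=\sum_{k,h}N_q(k,h)\,e^{kt+h},
\end{equation*}
and comparing with the expansion at $p$ gives \eqref{eq:NN2}.

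The only subtlety is the verification that $\varpi\lambda_p=\lambda_q$ and $w_p\lambda_p=\lambda_p$, both of which follow from the characterization $\langle\alpha_i^\vee,\lambda_j\rangle=\delta_{ij}$ together with the respective actions on $\Delta$; everything else is coefficient extraction from an exponential sum, so no real obstacle is expected.
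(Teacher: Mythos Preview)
Your proposal is correct and essentially identical to the paper's own proof: both parts use the invariance $X(\nu)=X(w\nu)$ applied to $t\lambda_p+\rho$, with $w=w_p$ (together with Lemma~\ref{lm:rho} and $w_p\lambda_p=\lambda_p$) for part~(1) and $w=\varpi$ (together with $\varpi\lambda_p=\lambda_q$, $\varpi\rho=\rho$) for part~(2), followed by coefficient comparison. Your write-up just makes the justifications of $w_p\lambda_p=\lambda_p$ and $\varpi\rho=\rho$ and the reindexing step more explicit than the paper does.
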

\begin{proof}
  \begin{enumerate}
  \item We have
    \begin{equation}
      X((c_p+t)\lambda_p-\rho)
        =X(t\lambda_p+w_p\rho)
        =X(w_p(t\lambda_p+\rho))
        =X(t\lambda_p+\rho)
    \end{equation}
    by Lemma \ref{lm:rho}.  
    Hence \eqref{eq:NN1} by comparing the coefficients.
  \item
    Since for $\varpi\in\Aut(\Phi)$ such that $q=\varpi p$,
    \begin{equation}
      X(t\lambda_p+\rho)
      =X(\varpi(t\lambda_p+\rho))
      =X(t\lambda_q+\rho),
    \end{equation}
    we have \eqref{eq:NN2}.
  \end{enumerate}
\end{proof}

\begin{lemma}
  \label{lm:Nw}
  \begin{enumerate}
  \item 
    For $(k,h)\in\mathbb{Z}^2$, 
    \begin{equation}
      \label{eq:NNw1}
      N_p(k,h)
      -N_{p,w_0ww_p}(k,kc_p-h)=
      N_{p,w}(k,h).
    \end{equation}
  \item
    For $(k,h)\in\mathbb{Z}^2$ and
    $\varpi\in\Aut(\Gamma)$ with $q=\varpi p$,
    \begin{equation}
      \label{eq:NNw2}
      N_{p,w}(k,h)=
      N_{q,\varpi w\varpi^{-1}}(k,h).
    \end{equation}
  \end{enumerate}
\end{lemma}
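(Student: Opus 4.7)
Part (2) should follow from a single bijection. Since $\varpi\in\Aut(\Gamma)\subset\Aut(\Phi)$ satisfies $\varpi\Phi_+=\Phi_+$, $\varpi\Delta=\Delta$, $\varpi\lambda_p=\lambda_q$, and $\varpi\rho=\rho$ (the last two because $\varpi$ permutes fundamental weights by the induced permutation of indices and $\rho=\sum_j\lambda_j$), I would verify that $\alpha\mapsto\varpi\alpha$ is a bijection from $w^{-1}\Phi_-$ onto $(\varpi w\varpi^{-1})^{-1}\Phi_-=\varpi(w^{-1}\Phi_-)$. Since $(\varpi\alpha)^\vee=\varpi\alpha^\vee$, the adjunction identities $\langle\lambda_q,\varpi\alpha^\vee\rangle=\langle\varpi^{-1}\lambda_q,\alpha^\vee\rangle=\langle\lambda_p,\alpha^\vee\rangle$ and $\langle\rho,\varpi\alpha^\vee\rangle=\langle\rho,\alpha^\vee\rangle$ show that the bijection preserves both the coroot pairing with $\lambda_p$ (vs.\ $\lambda_q$) and the height.

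For part (1), the plan has two steps. First, using the disjoint decomposition $\Phi=w^{-1}\Phi_+\sqcup w^{-1}\Phi_-$ together with the sign involution $\alpha\mapsto-\alpha$ (which bijects $w^{-1}\Phi_+$ with $w^{-1}\Phi_-$ and negates both $\langle\lambda_p,\alpha^\vee\rangle$ and $\Ht\alpha^\vee$), I get
\begin{equation*}
N_p(k,h)=N_{p,w}(k,h)+N_{p,w}(-k,-h).
\end{equation*}
It therefore suffices to prove $N_{p,w_0ww_p}(k,kc_p-h)=N_{p,w}(-k,-h)$.

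Second, I would construct the bijection realizing this identity via $\beta\mapsto\alpha:=-w_p\beta$. Indeed, using $w_0^2=w_p^2=\Id$ and $w_0\Phi_-=\Phi_+$, one has $(w_0ww_p)^{-1}\Phi_-=w_pw^{-1}w_0\Phi_-=w_p(w^{-1}\Phi_+)=-w_p(w^{-1}\Phi_-)$, so this map is indeed a bijection $w^{-1}\Phi_-\to(w_0ww_p)^{-1}\Phi_-$. To see how the parameters transform, I would invoke two facts: (a) $w_p\lambda_p=\lambda_p$, since each simple reflection $\sigma_j$ with $j\neq p$ generating $W_p$ fixes $\lambda_p$ (because $\langle\lambda_p,\alpha_j^\vee\rangle=0$); and (b) Lemma~\ref{lm:rho}, which gives $w_p\rho=c_p\lambda_p-\rho$. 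Then
\begin{equation*}
\langle\lambda_p,\alpha^\vee\rangle=-\langle w_p\lambda_p,\beta^\vee\rangle=-\langle\lambda_p,\beta^\vee\rangle,\qquad
\Ht\alpha^\vee=-\langle w_p\rho,\beta^\vee\rangle=\Ht\beta^\vee-c_p\langle\lambda_p,\beta^\vee\rangle.
\end{equation*}
Setting $\langle\lambda_p,\beta^\vee\rangle=-k$ and $\Ht\beta^\vee=-h$, the image $\alpha$ has parameters $(k,kc_p-h)$, which gives the desired equality $N_{p,w_0ww_p}(k,kc_p-h)=N_{p,w}(-k,-h)$, and combining with the decomposition above completes part (1).

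The main obstacle, as I see it, is nothing deep but rather the bookkeeping: correctly juggling the three involutions $w_0$, $w_p$, and the sign $\alpha\mapsto-\alpha$ when inverting $w_0ww_p$, and threading Lemma~\ref{lm:rho} together with $w_p\lambda_p=\lambda_p$ so that the quadratic shift by $kc_p$ emerges cleanly from the height transformation. Once those ingredients are assembled, both parts reduce to elementary bijections of root subsets.
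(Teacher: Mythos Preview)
Your proof is correct and follows essentially the same route as the paper: the same disjoint decomposition $\Phi=w^{-1}\Phi_-\sqcup w^{-1}\Phi_+$, the same identification $w^{-1}\Phi_+=w_p\,(w_0ww_p)^{-1}\Phi_-$, and the same two inputs $w_p\lambda_p=\lambda_p$ and Lemma~\ref{lm:rho} to track the parameter shift. The only cosmetic difference is that the paper packages the bijection as a character identity for $X(t\lambda_p+\rho)$ and reads off coefficients, whereas you work directly with the bijection $\beta\mapsto -w_p\beta$ (splitting it through the intermediate step $N_p(k,h)=N_{p,w}(k,h)+N_{p,w}(-k,-h)$); the content is identical.
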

\begin{proof}
  \begin{enumerate}
  \item Since
    \begin{equation}
      \begin{split}
        \Phi
        &=w^{-1}\Phi_-\cup
        w^{-1}\Phi_+
        \\
        &=w^{-1}\Phi_-\cup
        w_p(w_pw^{-1}w_0)\Phi_-,
      \end{split}
    \end{equation}
    we have
    \begin{equation}
      \begin{split}
        X(t\lambda_p+\rho)
        &=
        \sum_{\alpha\in w^{-1}\Phi_-}e^{\alpha^\vee}(t\lambda_p+\rho)
        +
        \sum_{\alpha\in w_p(w_0ww_p)^{-1}\Phi_-}e^{\alpha^\vee}(t\lambda_p+\rho)
        \\
        &=
        \sum_{\alpha\in w^{-1}\Phi_-}e^{\alpha^\vee}(t\lambda_p+\rho)
        +
        \sum_{\alpha\in (w_0ww_p)^{-1}\Phi_-}e^{\alpha^\vee}((c_p+t)\lambda_p-\rho).
      \end{split}
    \end{equation}
    By comparing this with \eqref{eq:def_X},
    we obtain \eqref{eq:NNw1}. 
  \item 
    We have
    \begin{equation}
      \sum_{\alpha\in w^{-1}\Phi_-}e^{\alpha^\vee}(t\lambda_p+\rho)
      =
      \sum_{\alpha\in \varpi w^{-1}\varpi^{-1}\Phi_-}e^{\alpha^\vee}(\varpi(t\lambda_p+\rho))
      =
      \sum_{\alpha\in (\varpi w\varpi^{-1})^{-1}\Phi_-}e^{\alpha^\vee}(t\lambda_q+\rho),
    \end{equation}
    which implies \eqref{eq:NNw2}.
  \end{enumerate}
\end{proof}

\section{Proof of the functional equations}
\label{sec:proof}
\begin{proof}[Proof of Proposition \ref{prop:exp_omega}]
Put the coordinate
\begin{equation}
  \lambda=\sum_{k=1}^r(1+s_k)\lambda_k
=\rho+\sum_{k=1}^r s_k \lambda_k,
\end{equation}
so that for $\alpha^\vee=\sum_{k=1}^r a_k\alpha_k^\vee$,
\begin{equation}
\langle\lambda-\rho,\alpha^\vee\rangle=\sum_{k=1}^r a_ks_k.
\end{equation}

For $w\in W$,
the corresponding term in \eqref{eq:def_period} besides the exponential factor is calculated as
\begin{multline}
  \label{eq:term_period}
  A_w=
  \biggl(
  \prod_{\alpha\in\Delta}
  \frac{1}{\langle w\lambda-\rho,\alpha^\vee\rangle}
  \biggr)
  \biggl(
  \prod_{\alpha\in\Phi_w}
  \frac{\xi(\langle\lambda,\alpha^\vee\rangle)}{\xi(\langle\lambda,\alpha^\vee\rangle+1)}
  \biggr)
  \\
  \begin{aligned}
  &=
  \biggl(
  \prod_{\alpha\in\Delta}
  \frac{1}{\langle w\lambda,\alpha^\vee\rangle-1}
  \biggr)
  \biggl(
  \prod_{\alpha\in\Phi_w\cap\Delta_p}
  \frac{1}{\langle\lambda,\alpha^\vee\rangle-1}
  \biggr)
\\
&\qquad\qquad\times
  \biggl(
  \prod_{\alpha\in\Phi_w\cap\Delta_p}
  \frac{(\langle \lambda,\alpha^\vee\rangle-1)\xi(\langle\lambda,\alpha^\vee\rangle)}{\xi(\langle\lambda,\alpha^\vee\rangle+1)}
  \biggr)
  \biggl(
  \prod_{\alpha\in\Phi_w\setminus\Delta_p}
  \frac{\xi(\langle\lambda,\alpha^\vee\rangle)}{\xi(\langle\lambda,\alpha^\vee\rangle+1)}
  \biggr)
\\
  &=
  \biggl(
  \prod_{\alpha\in (w^{-1}\Delta\cup \Phi_w)\cap\Delta_p}
  \frac{1}{\langle \lambda,\alpha^\vee\rangle-1}
  \biggr)
  \biggl(
  \prod_{\alpha\in (w^{-1}\Delta)\setminus\Delta_p}
  \frac{1}{\langle \lambda,\alpha^\vee\rangle-1}
  \biggr)
\\
&\qquad\qquad\times
  \biggl(
  \prod_{\alpha\in\Phi_w\cap\Delta_p}
  \frac{(\langle \lambda,\alpha^\vee\rangle-1)\xi(\langle\lambda,\alpha^\vee\rangle)}{\xi(\langle\lambda,\alpha^\vee\rangle+1)}
  \biggr)
  \biggl(
  \prod_{\alpha\in\Phi_w\setminus\Delta_p}
  \frac{\xi(\langle\lambda,\alpha^\vee\rangle)}{\xi(\langle\lambda,\alpha^\vee\rangle+1)}
  \biggr).
\end{aligned}
\end{multline}

In order to put $s_p=s$ and take all the residues at $s_k=0$ for $k\neq p$ in \eqref{eq:term_period},
first we consider the third factor of the last member of \eqref{eq:term_period}.
For $\alpha_k\in\Phi_w\cap\Delta_p$, we have
\begin{equation}\label{eq:term_period1}
  \frac{(\langle \lambda,\alpha_k^\vee\rangle-1)\xi(\langle\lambda,\alpha_k^\vee\rangle)}{\xi(\langle\lambda,\alpha_k^\vee\rangle+1)}
=\frac{s_k\xi(s_k+1)}{\xi(s_k+2)}=\frac{1}{\xi(2)}+o(s_k)
\end{equation}
when $s_k\to 0$.

In the last factor,
for
$\alpha\in\Phi_w\setminus\Delta_p$, we have
\begin{equation}
\label{eq:term_period2}
  \frac{\xi(\langle\lambda,\alpha^\vee\rangle)}{\xi(\langle\lambda,\alpha^\vee\rangle+1)}
=
  \frac{\xi(\langle\lambda_p,\alpha^\vee\rangle s+\Ht\alpha^\vee)}
{\xi(\langle\lambda_p,\alpha^\vee\rangle s+\Ht\alpha^\vee+1)}
\end{equation}
when $s_k=0$ for $k\neq p$ and $s_p=s$.
If $\langle\lambda_p,\alpha^\vee\rangle=0$, then
$\alpha\in\Phi_{p+}\setminus\Delta_p$ and hence $\Ht\alpha^\vee\geq2$.
Thus
we see that
\eqref{eq:term_period2} is finite
if $\langle\lambda_p,\alpha^\vee\rangle=0$,
due to $\Ht\alpha^\vee\geq 2$.
Moreover it is also finite
 for appropriate $s\in\mathbb{C}$
if $\langle\lambda_p,\alpha^\vee\rangle\neq 0$.

We consider the second factor of the last member of \eqref{eq:term_period}.
When $s_k=0$ for $k\neq p$ and $s_p=s$,
we have
\begin{equation}
\label{eq:term_period3}
\langle\lambda,\alpha^\vee\rangle-1
=
  \langle\lambda-\rho,\alpha^\vee\rangle+\Ht\alpha^\vee-1
=
  \langle\lambda_p,\alpha^\vee\rangle s+\Ht\alpha^\vee-1.
\end{equation}
Since
 for $\alpha\in (w^{-1}\Delta)\setminus\Delta_p$,
 $\Ht\alpha^\vee\neq 1$ or $\langle\lambda_p,\alpha^\vee\rangle\neq0$ holds,
 \eqref{eq:term_period3} does not vanish identically.

The first factor is calculated as
\begin{equation}
\label{eq:term_period4}
  \prod_{\alpha\in (w^{-1}\Delta\cup \Phi_w)\cap\Delta_p}
  \frac{1}{\langle \lambda,\alpha^\vee\rangle-1}
=  \prod_{\alpha_k\in (w^{-1}\Delta\cup \Phi_w)\cap\Delta_p}
  \frac{1}{s_k}.
\end{equation}
Hence from \eqref{eq:term_period1}, \eqref{eq:term_period2},
\eqref{eq:term_period3} and \eqref{eq:term_period4}, we see that when
we take all the residues, only the terms with $\Delta_p\subset
w^{-1}\Delta\cup \Phi_w$ survive and the others vanish. In the former cases,
we obtain
\begin{multline}
  \Res_{\substack{s_k=0\\k\neq p}}
  A_w=\biggl(
  \prod_{\alpha\in(w^{-1}\Delta)\setminus\Delta_p}
  \frac{1}{\langle \lambda_p,\alpha^\vee\rangle s+\Ht\alpha^\vee-1}
  \biggr)
  \\
  \times
  \biggl(
  \prod_{\alpha\in\Phi_w\cap\Delta_p}
  \frac{1}{\xi(2)}
  \biggr)
  \biggl(
  \prod_{\alpha\in\Phi_w\setminus\Delta_p}
  \frac{\xi(\langle \lambda_p,\alpha^\vee\rangle s+\Ht\alpha^\vee)}
  {\xi(\langle \lambda_p,\alpha^\vee\rangle s+\Ht\alpha^\vee+1)}
  \biggr),
\end{multline}
which does not depend on the ordering of $\Delta_p$.

 Note that 
$\Delta_p\subset w^{-1}\Delta\cup \Phi_w$ if and only if $\Delta_p\subset
w^{-1}(\Delta\cup \Phi_-)$. 
If we put $s_p=s$ and take all
the residues at $s_k=0$ for $k\neq p$ in \eqref{eq:def_period},
we get
\begin{equation}
  \begin{split}
    \omega^{G/P}_{\mathbb{Q}}(s;T)
    &=
    \sum_{\substack{w\in W\\\Delta_p\subset w^{-1}(\Delta\cup \Phi_-)}}
    e^{\langle w(\rho+s\lambda_p)-\rho,T\rangle}
    \biggl(
    \prod_{\alpha\in(w^{-1}\Delta)\setminus\Delta_p}
    \frac{1}{\langle \lambda_p,\alpha^\vee\rangle s+\Ht\alpha^\vee-1}
    \biggr)
    \\
    &\qquad\qquad\times
    \biggl(
    \prod_{\alpha\in\Phi_w\cap\Delta_p}
    \frac{1}{\xi(2)}
    \biggr)
    \biggl(
    \prod_{\alpha\in\Phi_w\setminus\Delta_p}
    \frac{\xi(\langle \lambda_p,\alpha^\vee\rangle s+\Ht\alpha^\vee)}
    {\xi(\langle \lambda_p,\alpha^\vee\rangle s+\Ht\alpha^\vee+1)}
    \biggr)
    \\
    &=
    \sum_{\substack{w\in W\\\Delta_p\subset w^{-1}(\Delta\cup \Phi_-)}}
    e^{\langle w(\rho+s\lambda_p)-\rho,T\rangle}
    \biggl(
    \prod_{\alpha\in(w^{-1}\Delta)\setminus\Delta_p}
    \frac{1}{\langle \lambda_p,\alpha^\vee\rangle s+\Ht\alpha^\vee-1}
    \biggr)
    \\
    &\qquad\qquad\times
    \biggl(
    \prod_{\alpha\in\Phi_w\setminus\Delta_p}
    \xi(\langle \lambda_p,\alpha^\vee\rangle s+\Ht\alpha^\vee)
    \biggr)
    \biggl(
    \prod_{\alpha\in\Phi_w}
    \frac{1}
    {\xi(\langle \lambda_p,\alpha^\vee\rangle s+\Ht\alpha^\vee+1)}
    \biggr)
    \\
    &=
    \sum_{\substack{w\in W\\\Delta_p\subset w^{-1}(\Delta\cup \Phi_-)}}
    e^{\langle w(\rho+s\lambda_p)-\rho,T\rangle}
    \biggl(
    \prod_{\alpha\in(w^{-1}\Delta)\setminus\Delta_p}
    \frac{1}{\langle \lambda_p,\alpha^\vee\rangle s+\Ht\alpha^\vee-1}
    \biggr)
    \\
    &\qquad\qquad\times
    \biggl(
    \prod_{\alpha\in\Phi_w\setminus\Delta_p}
    \xi(\langle \lambda_p,\alpha^\vee\rangle s+\Ht\alpha^\vee)
    \biggr)
    \biggl(
    \prod_{\alpha\in (-\Phi_w)}
    \frac{1}
    {\xi(\langle \lambda_p,\alpha^\vee\rangle s+\Ht\alpha^\vee)}
    \biggr),
  \end{split}
\end{equation}
where in the last equality, we used the functional equation for $\xi(s)$.
\end{proof}

Let
\begin{equation}
\label{eq:def_F_p}
  F_p(s)
  =
  \prod_{\alpha\in\Phi_-}
  \xi(\langle \lambda_p,\alpha^\vee\rangle s+\Ht\alpha^\vee),
\end{equation}
and define
\begin{equation}
\label{eq:def_Z_p}
    Z_p(s;T)=
    F_p(s)\omega^{G/P}_{\mathbb{Q}}(s;T).
\end{equation}
Then we have the following proposition.
\begin{prop}[Functional Equations]
\label{prop:main}
\begin{equation}
\label{eq:main}
  \begin{split}
Z_p(-c_p-s;\varpi_0 T)&=Z_p(s;T)
\\
&=Z_q(s;\varpi T),
\end{split}
\end{equation}
where 
$\varpi\in\Aut(\Gamma)$ with
$q=\varpi p$.
\end{prop}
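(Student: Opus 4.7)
My plan is to prove both equalities by constructing explicit term-by-term bijections in the sum expression for $\omega^{G/P}_{\mathbb{Q}}$ given in Proposition \ref{prop:exp_omega}, following the pattern of the $A_2$ example in Section \ref{sec:example}. The two bijections on $W$ are the involution $w \mapsto w' = w_0 w w_p$ for the first equality and the conjugation $w \mapsto \varpi w \varpi^{-1}$ for the second.

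The first step is to rewrite $Z_p(s;T) = F_p(s)\omega^{G/P}_{\mathbb{Q}}(s;T)$ in a more convenient form. The $\xi$-denominator inside Proposition \ref{prop:exp_omega} runs over $-\Phi_w \subset \Phi_-$, so multiplying by $F_p(s)=\prod_{\alpha\in\Phi_-}\xi(\langle\lambda_p,\alpha^\vee\rangle s+\Ht\alpha^\vee)$ cancels this denominator and leaves the factor $\prod_{\alpha\in\Phi_-\setminus(-\Phi_w)}\xi(\ldots)=\prod_{\alpha\in\Phi_-\cap w^{-1}\Phi_-}\xi(\ldots)$. Combining with the numerator product over $\Phi_w\setminus\Delta_p$ and using the disjoint decomposition $w^{-1}\Phi_-=\Phi_w\sqcup(\Phi_-\cap w^{-1}\Phi_-)$ together with $\Delta_p\cap\Phi_-=\emptyset$, one obtains
\begin{equation}
  Z_p(s;T)=\sum_{\substack{w\in W\\\Delta_p\subset w^{-1}(\Delta\cup\Phi_-)}} e^{\langle w(\rho+s\lambda_p)-\rho,T\rangle}\,R_w(s)\prod_{\alpha\in w^{-1}\Phi_-\setminus\Delta_p}\xi(\langle\lambda_p,\alpha^\vee\rangle s+\Ht\alpha^\vee),
\end{equation}
where $R_w(s)=\prod_{\alpha\in(w^{-1}\Delta)\setminus\Delta_p}(\langle\lambda_p,\alpha^\vee\rangle s+\Ht\alpha^\vee-1)^{-1}$. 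This is the workhorse form.

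For the first equality I apply the involution $w'=w_0ww_p$; Lemma \ref{lm:bij_w}(1) matches the summation conditions, so it suffices to show that the $w'$-summand at $(-c_p-s,\varpi_0 T)$ equals the $w$-summand at $(s,T)$. The exponential matches via $w_p\lambda_p=\lambda_p$ (since $W_p$ fixes $\lambda_p$), Lemma \ref{lm:rho} (giving $w_p\rho=c_p\lambda_p-\rho$), and $w_0=-\varpi_0$ together with $\varpi_0\rho=\rho$; a direct computation yields $w_0ww_p(\rho+(-c_p-s)\lambda_p)=\varpi_0 w(\rho+s\lambda_p)$, after which the isometry $\langle\varpi_0 u,\varpi_0 v\rangle=\langle u,v\rangle$ finishes the argument. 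For the rational factor $R_{w'}$ and the $\xi$-product, the substitutions $\alpha=-w_p\gamma$ (with $\gamma\in w^{-1}\Delta$) and $\alpha=w_p\beta$ followed by $\gamma=-\beta\in w^{-1}\Phi_-$ transform the argument $\langle\lambda_p,\alpha^\vee\rangle(-c_p-s)+\Ht\alpha^\vee$ into $\langle\lambda_p,\gamma^\vee\rangle s+\Ht\gamma^\vee$ via the identities $\langle\lambda_p,w_p\gamma^\vee\rangle=\langle\lambda_p,\gamma^\vee\rangle$ and $\Ht(w_p\gamma^\vee)=c_p\langle\lambda_p,\gamma^\vee\rangle-\Ht\gamma^\vee$ (both consequences of $w_p\lambda_p=\lambda_p$ and Lemma \ref{lm:rho}), and the identity $w_p\Delta_p=-\Delta_p$ ensures the $\Delta_p$-exclusion is preserved.

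For the second equality $Z_p(s;T)=Z_q(s;\varpi T)$, I apply the bijection $w\mapsto\varpi w\varpi^{-1}$, using Lemma \ref{lm:bij_w}(2) to match summation conditions, together with $\varpi\lambda_p=\lambda_q$, $\varpi\rho=\rho$, $\varpi\Delta=\Delta$, $\varpi\Delta_p=\Delta_q$, and the isometry property of $\varpi$; the verification is entirely parallel to but simpler than the first case, since no sign-twist like $w_0=-\varpi_0$ enters. The main obstacle is the bookkeeping in the first equality, specifically verifying that the root-indexing sets transform consistently under the $w_p$-substitutions and that the $\Delta_p$-exclusion is faithfully carried across (neither dropped nor duplicated); the identities $w_p\Delta_p=-\Delta_p$ and $w_p^2=\Id$ are the key tools that close the loop.
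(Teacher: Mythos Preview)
Your proposal is correct and follows essentially the same route as the paper's proof. The paper first records your ``workhorse form'' as a separate Proposition (your derivation of it is exactly theirs), then packages the rational factor and the $\xi$-product as functions $f_{p,w}(s)$ and $g_{p,w}(s)$ and proves the identities $f_{p,w}(-c_p-s)=f_{p,w_0ww_p}(s)$, $g_{p,w}(-c_p-s)=g_{p,w_0ww_p}(s)$ (and their $\varpi$-analogues) via a multiset $S(A;s;p,w)$ of affine linear forms, which is nothing but your substitution $\alpha\mapsto -w_p\gamma$ written in slightly different notation; the exponential factor and the use of Lemmas \ref{lm:rho} and \ref{lm:bij_w} are identical.
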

To show this proposition, we need the explicit form of $Z_p(s;T)$.
\begin{prop}
\label{prop:explicit}
  \begin{equation}
    \label{eq:exp_Z_p}
    \begin{split}
      Z_p(s;T)
      &=
      \sum_{\substack{w\in W\\\Delta_p\subset w^{-1}(\Delta\cup \Phi_-)}}
      e^{\langle w(\rho+s\lambda_p)-\rho,T\rangle}
      \biggl(
      \prod_{\alpha\in(w^{-1}\Delta)\setminus\Delta_p}
      \frac{1}{\langle \lambda_p,\alpha^\vee\rangle s+\Ht\alpha^\vee-1}
      \biggr)
      \\
      &\qquad\qquad\times
      \biggl(
      \prod_{\alpha\in(w^{-1}\Phi_-)\setminus\Delta_p}
      \xi(\langle \lambda_p,\alpha^\vee\rangle s+\Ht\alpha^\vee)
      \biggr).
    \end{split}
  \end{equation}
\end{prop}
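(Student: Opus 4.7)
The plan is to substitute the explicit form \eqref{eq:exp_period} for $\omega^{G/P}_{\mathbb{Q}}(s;T)$ provided by Proposition \ref{prop:exp_omega} into the definition \eqref{eq:def_Z_p}, and then carry out the cancellation against $F_p(s)$ inside each summand. Since $F_p(s)$ is independent of $w$, we can distribute it into every term of the sum over $w\in W$ with $\Delta_p\subset w^{-1}(\Delta\cup\Phi_-)$, leaving the exponential factor and the rational prefactor $\prod_{\alpha\in(w^{-1}\Delta)\setminus\Delta_p}(\langle\lambda_p,\alpha^\vee\rangle s+\Ht\alpha^\vee-1)^{-1}$ untouched.

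The key observation is the disjoint decomposition
\begin{equation}
\Phi_- = (-\Phi_w)\sqcup(\Phi_-\cap w^{-1}\Phi_-),
\end{equation}
which follows from $-\Phi_w=\Phi_-\cap w^{-1}\Phi_+$. Using this, I would split
\begin{equation}
F_p(s)=\Bigl(\prod_{\alpha\in-\Phi_w}\xi(\langle\lambda_p,\alpha^\vee\rangle s+\Ht\alpha^\vee)\Bigr)\Bigl(\prod_{\alpha\in\Phi_-\cap w^{-1}\Phi_-}\xi(\langle\lambda_p,\alpha^\vee\rangle s+\Ht\alpha^\vee)\Bigr).
\end{equation}
The first factor cancels exactly the reciprocal product $\prod_{\alpha\in(-\Phi_w)}\xi(\cdots)^{-1}$ appearing in \eqref{eq:exp_period}.

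After this cancellation the surviving $\xi$-factors in the $w$-th summand form the product over $(\Phi_w\setminus\Delta_p)\sqcup(\Phi_-\cap w^{-1}\Phi_-)$. Since $w^{-1}\Phi_-=\Phi_w\sqcup(\Phi_-\cap w^{-1}\Phi_-)$ and $\Delta_p\subset\Phi_+$ is disjoint from $\Phi_-$, this index set coincides with $(w^{-1}\Phi_-)\setminus\Delta_p$, which is exactly the index set appearing in \eqref{eq:exp_Z_p}.

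This is pure set-theoretic bookkeeping involving the standard decompositions of $\Phi_-$ and $w^{-1}\Phi_-$ by $\Phi_w$ and $-\Phi_w$; there is no analytic obstacle. The only point requiring mild care is to check that $\Delta_p$, lying in $\Phi_+$, interacts correctly with both decompositions, so that writing the final product in the compact form $\prod_{\alpha\in(w^{-1}\Phi_-)\setminus\Delta_p}$ does not accidentally remove or double-count any root.
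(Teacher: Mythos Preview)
Your proposal is correct and follows essentially the same route as the paper's own proof: both use the decomposition $\Phi_-=(-\Phi_w)\sqcup(\Phi_-\cap w^{-1}\Phi_-)$ to cancel the $\prod_{\alpha\in-\Phi_w}\xi(\cdots)^{-1}$ factor against part of $F_p(s)$, and then identify $(\Phi_w\setminus\Delta_p)\sqcup(\Phi_-\cap w^{-1}\Phi_-)=(w^{-1}\Phi_-)\setminus\Delta_p$ via $\Delta_p\subset\Phi_+$. The paper presents these steps in the same order with the same justifications.
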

\begin{proof}
Since
\begin{equation}
  \begin{split}
\Phi_-\setminus(-\Phi_w)
&=\Phi_-\setminus(\Phi_-\cap w^{-1}\Phi_+)
\\
&=\Phi_-\setminus w^{-1}\Phi_+
\\
&=\Phi_-\cap w^{-1}\Phi_-,
\end{split}
\end{equation}
we have
\begin{multline}
    Z_p(s;T)=
    \sum_{\substack{w\in W\\\Delta_p\subset w^{-1}(\Delta\cup \Phi_-)}}
    e^{\langle w(\rho+s\lambda_p)-\rho,T\rangle}
    \biggl(
    \prod_{\alpha\in(w^{-1}\Delta)\setminus\Delta_p}
    \frac{1}{\langle \lambda_p,\alpha^\vee\rangle s+\Ht\alpha^\vee-1}
    \biggr)
    \\
\times
    \biggl(
    \prod_{\alpha\in(\Phi_w\setminus\Delta_p)\cup(\Phi_-\cap w^{-1}\Phi_-)}
    \xi(\langle \lambda_p,\alpha^\vee\rangle s+\Ht\alpha^\vee)
    \biggr).
  \end{multline}
  Using
\begin{equation}
  \begin{split}
(\Phi_w\setminus\Delta_p)\cup
(\Phi_-\cap w^{-1}\Phi_-)
&=  
((\Phi_+\cap w^{-1}\Phi_-)\setminus\Delta_p)
\cup(\Phi_-\cap w^{-1}\Phi_-)
\\
&=  
((\Phi_+\cap w^{-1}\Phi_-)
\cup(\Phi_-\cap w^{-1}\Phi_-)
)\setminus\Delta_p
\\
&=  
w^{-1}\Phi_-\setminus\Delta_p,
\end{split}
\end{equation}
we arrive at \eqref{eq:exp_Z_p}.
\end{proof}

Let
\begin{align}
  f_{p,w}(s)
  &=
  \prod_{\alpha\in(w^{-1}\Delta)\setminus\Delta_p}
  \frac{1}{\langle \lambda_p,\alpha^\vee\rangle s+\Ht\alpha^\vee-1},
  \\
  g_{p,w}(s)
  &=
  \prod_{\alpha\in(w^{-1}\Phi_-)\setminus\Delta_p}
  \xi(\langle \lambda_p,\alpha^\vee\rangle s+\Ht\alpha^\vee),
\end{align}
so that 
\begin{equation}
  Z_p(s;T)=
      \sum_{\substack{w\in W\\\Delta_p\subset w^{-1}(\Delta\cup \Phi_-)}}
      e^{\langle w(\rho+s\lambda_p)-\rho,T\rangle}f_{p,w}(s)g_{p,w}(s).
\end{equation}
\begin{lemma}
  \label{lm:sym_fg}
For $w\in W$ and
$\varpi\in\Aut(\Gamma)$ with
$q=\varpi p$,
  \begin{alignat}{2}
\label{eq:sym_fg1}
    f_{p,w}(-c_p-s)&=f_{p,w_0ww_p}(s), &\qquad
    g_{p,w}(-c_p-s)&=g_{p,w_0ww_p}(s), \\
\label{eq:sym_fg2}
    f_{p,\varpi^{-1}w\varpi}(s)&=f_{q,w}(s), &\qquad
    g_{p,\varpi^{-1}w\varpi}(s)&=g_{q,w}(s).
  \end{alignat}
\end{lemma}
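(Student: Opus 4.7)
\textbf{Plan for Lemma \ref{lm:sym_fg}.} My strategy for both identities is to exhibit an explicit bijection between the index sets of the two products and to verify that the individual factors transform in the required manner.

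For \eqref{eq:sym_fg1}, set $w' = w_0 w w_p$. Using $w_0^2 = w_p^2 = \Id$, $w_0\Delta = -\Delta$, and $w_0\Phi_- = \Phi_+ = -\Phi_-$, one computes
\[
(w')^{-1}\Delta = -w_p w^{-1}\Delta, \qquad (w')^{-1}\Phi_- = -w_p w^{-1}\Phi_-.
\]
I therefore introduce the map $\tau(\alpha) = -w_p\alpha$, which is an involution on $\Phi$. Since $w_p\Delta_p = -\Delta_p$, $\tau$ fixes $\Delta_p$ setwise, so it restricts to bijections $(w^{-1}\Delta)\setminus\Delta_p \leftrightarrow ((w')^{-1}\Delta)\setminus\Delta_p$ and $(w^{-1}\Phi_-)\setminus\Delta_p \leftrightarrow ((w')^{-1}\Phi_-)\setminus\Delta_p$.

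The next step is to track how the two quantities $\langle\lambda_p,\alpha^\vee\rangle$ and $\Ht\alpha^\vee$ change under $\tau$. Because $w_p$ is a product of reflections $\sigma_k$ with $k\neq p$, it fixes $\lambda_p$, yielding $\langle\lambda_p,\tau(\alpha)^\vee\rangle = -\langle\lambda_p,\alpha^\vee\rangle$. For the height, Lemma \ref{lm:rho} supplies $w_p\rho = c_p\lambda_p - \rho$, whence
\[
\Ht\tau(\alpha)^\vee = -\langle w_p\rho,\alpha^\vee\rangle = -c_p\langle\lambda_p,\alpha^\vee\rangle + \Ht\alpha^\vee.
\]
Substituting, the argument of $\xi$ (or of the linear factor) at $\tau(\alpha)$ evaluated at $s$ becomes $\langle\lambda_p,\alpha^\vee\rangle(-c_p-s) + \Ht\alpha^\vee$, which is exactly the argument at $\alpha$ evaluated at $-c_p - s$. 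Taking the product over the bijection then yields \eqref{eq:sym_fg1} for both $f_p$ and $g_p$ simultaneously.

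For \eqref{eq:sym_fg2}, set $w'' = \varpi^{-1}w\varpi$. Since $\varpi\Delta = \Delta$ and $\varpi\Phi_- = \Phi_-$, we get $(w'')^{-1}\Delta = \varpi^{-1}w^{-1}\Delta$ and likewise for $\Phi_-$, so $\alpha\mapsto\varpi\alpha$ gives bijections from the index sets for $w''$ at $p$ to those for $w$ at $q$; it sends $\Delta_p$ to $\Delta_q$ because $\varpi p = q$. As $\varpi$ is an orthogonal automorphism permuting the fundamental weights with $\varpi\lambda_p = \lambda_q$ and fixing $\rho = \sum_j \lambda_j$, both $\langle\lambda_p,\alpha^\vee\rangle$ and $\Ht\alpha^\vee$ pull back to $\langle\lambda_q,(\varpi\alpha)^\vee\rangle$ and $\Ht(\varpi\alpha)^\vee$ respectively, so the factors match identically. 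I expect the only genuine subtlety to be confirming that $\tau$ preserves $\Delta_p$ rather than merely $\Phi_p$ (which is immediate from $w_p\Delta_p = -\Delta_p$); the substantive ingredient is Lemma \ref{lm:rho}, which is precisely what converts the abstract shift $w_p\rho$ into the concrete additive constant responsible for the involution $s \mapsto -c_p - s$.
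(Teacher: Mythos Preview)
Your argument is correct and is essentially the same as the paper's: the paper also uses the bijection $\alpha\mapsto -w_p\alpha$ (packaged as the set identity $-w_p((w^{-1}A)\setminus\Delta_p)=((w_pw^{-1}w_0A)\setminus\Delta_p)$ for $A=\Delta,\Phi_-$) together with Lemma~\ref{lm:rho} to match the affine functionals, and the bijection $\alpha\mapsto\varpi\alpha$ for the second pair of identities. The only cosmetic difference is that the paper records the equality at the level of the multiset $S(A;s;p,w)$ of affine functionals rather than tracking factors one at a time.
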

\begin{proof}
  Fix $w\in W$. Then
  for a subset $A\subset\Phi$ with $A=\Delta$ or $\Phi_-$, we have
  $w_0A=-A$ and
  \begin{equation}
    \label{eq:f1}
    \begin{split}
      -w_p(w^{-1}A\setminus\Delta_p)
      &=
      (w_pw^{-1}(-A))\setminus(w_p(-\Delta_p))
      \\
      &=
      (w_pw^{-1}w_0A)\setminus \Delta_p.
    \end{split}
  \end{equation}
  Let
  \begin{equation}
    S(A;s;p,w)=\{\langle \lambda_p,\alpha^\vee\rangle s+\Ht\alpha^\vee~|~\alpha\in(w^{-1}A)\setminus\Delta_p\}
  \end{equation}
  be a set of affine linear functionals of the form $as+b$ with $a,b\in\mathbb{N}\cup\{0\}$ which admits duplications.
  Then we have
  \begin{align}
    f_{p,w}(s)&=\prod_{as+b\in S(\Delta;s;p,w)}\frac{1}{as+b-1},
\\
    g_{p,w}(s)&=\prod_{as+b\in S(\Phi_-;s;p,w)}\frac{1}{\xi(as+b)}.
  \end{align}
  
  Using the formula \eqref{eq:f1} and Lemma \ref{lm:rho},
  we have
  \begin{equation}
    \begin{split}
      S(A;-c_p-s;p,w)
      &=
      \{\langle \lambda_p,\alpha^\vee\rangle (-c_p-s)+\Ht\alpha^\vee~|~\alpha\in(w^{-1}A)\setminus\Delta_p\}
      \\
      &=
      \{\langle \lambda_p,-w_p\alpha^\vee\rangle s+\langle c_p\lambda_p-w_p\rho,-w_p\alpha^\vee\rangle~|~\alpha\in(w^{-1}A)\setminus\Delta_p\}
      \\
      &=\{\langle \lambda_p,\beta^\vee\rangle s+\langle \rho,\beta^\vee\rangle~|~\beta\in(w_pw^{-1}w_0A)\setminus\Delta_p\}
      \\
      &=S(A;s;p,w_0ww_p),
    \end{split}
  \end{equation}
which implies \eqref{eq:sym_fg1}.
  
  For \eqref{eq:sym_fg2},
  using $\varpi\Delta_p=\Delta_q$ and $\varpi A=A$, we have
  \begin{equation}
    \begin{split}
      S(A;s;p,\varpi^{-1}w\varpi)
      &=
      \{\langle \lambda_p,\alpha^\vee\rangle s+\Ht\alpha^\vee~|~\alpha\in(\varpi^{-1}w^{-1}\varpi A)\setminus\Delta_p\}
      \\
      &=
      \{\langle \varpi\lambda_p,\varpi\alpha^\vee\rangle s+\Ht\varpi\alpha^\vee~|~\alpha\in(\varpi^{-1}w^{-1}\varpi A)\setminus\Delta_p\}
      \\
      &=
      \{\langle \lambda_q,\beta^\vee\rangle s+\Ht\beta^\vee~|~\beta\in(w^{-1}A)\setminus\Delta_q\}
      \\
      &=S(A;s;q,w).
    \end{split}
  \end{equation}
\end{proof}
\begin{proof}[Proof of Proposition \ref{prop:main}]
  For $w\in W$,
  we have
  by Lemma \ref{lm:rho},
  \begin{equation}
    \begin{split}
      \langle w(\rho+(-c_p-s)\lambda_p)-\rho,\varpi_0T\rangle
      &=
      \langle w(\rho-c_p\lambda_p)-ws\lambda_p-\rho,-w_0T\rangle
      \\
      &=
      \langle -ww_p\rho-ww_ps\lambda_p-\rho,-w_0T\rangle
      \\
      &=
      \langle w_0ww_p(\rho+s\lambda_p)-\rho,T\rangle.
    \end{split}
  \end{equation}
  Hence
  using Proposition \ref{prop:explicit} and Lemma \ref{lm:sym_fg}, we obtain
  \begin{multline}
      Z_p(-c_p-s;\varpi_0T)
      \\
      \begin{aligned}
      &=
      \sum_{\substack{w\in W\\\Delta_p\subset w^{-1}(\Delta\cup \Phi_-)}}
      e^{\langle w(\rho+(-c_p-s)\lambda_p)-\rho,\varpi_0T\rangle}
      f_{p,w}(-c_p-s)g_{p,w}(-c_p-s)
      \\
      &=
      \sum_{\substack{w\in W\\\Delta_p\subset w^{-1}(\Delta\cup \Phi_-)}}
      e^{\langle w_0ww_p(\rho+s\lambda_p)-\rho,T\rangle}
      f_{p,w_0ww_p}(s)g_{p,w_0ww_p}(s)
      \\
      &=
      \sum_{\substack{v\in W\\\Delta_p\subset w_pv^{-1}w_0(\Delta\cup \Phi_-)}}
      e^{\langle v(\rho+s\lambda_p)-\rho,T\rangle}
      f_{p,v}(s)g_{p,v}(s),
    \end{aligned}
  \end{multline}
  which implies the first equality of \eqref{eq:main}
  by Lemma \ref{lm:bij_w} (1).
  
  As for the second equality,
  we have by Lemmas \ref{lm:sym_fg} and \ref{lm:bij_w} (2),
  \begin{equation}
    \begin{split}
      Z_q(s;\varpi T)
      &=
      \sum_{\substack{w\in W\\\Delta_q\subset w^{-1}(\Delta\cup \Phi_-)}}
      e^{\langle \varpi^{-1} w(\rho+s\lambda_q)-\rho,T\rangle}
      f_{q,w}(s)g_{q,w}(s)
      \\
      &=
      \sum_{\substack{w\in W\\\Delta_q\subset w^{-1}(\Delta\cup \Phi_-)}}
      e^{\langle \varpi^{-1}w\varpi(\rho+s\lambda_p)-\rho,T\rangle}
      f_{p,\varpi^{-1}w\varpi}(s)g_{p,\varpi^{-1}w\varpi}(s)
      \\
      &=
      \sum_{\substack{v\in W\\\Delta_q\subset \varpi v^{-1}\varpi^{-1}(\Delta\cup \Phi_-)}}
      e^{\langle v(\rho+s\lambda_p)-\rho,T\rangle}
      f_{p,v}(s)g_{p,v}(s).
    \end{split}
  \end{equation}
\end{proof}

From Proposition \ref{prop:explicit}, we see that $Z_p(s;T)$ has no $\xi$
functions in the denominator of each term.  In fact,
it is too much multiplied;
 $Z_p(s;T)$ can be
factorized by some $\xi$ functions and should be divided by them
 in order to obtain Weng's zeta function $\xi^{G/P}_{\mathbb{Q};o}(s;T)$.

Let
\begin{equation}
  H_{p,w}(s)=
  \biggl(
  \prod_{\alpha\in\Phi_w\setminus\Delta_p}
  \xi(\langle \lambda_p,\alpha^\vee\rangle s+\Ht\alpha^\vee)
  \biggr)
  \biggl(
  \prod_{\alpha\in \Phi_w}
  \frac{1}
  {\xi(\langle \lambda_p,\alpha^\vee\rangle s+\Ht\alpha^\vee+1)}
  \biggr),
\end{equation}
which is the term corresponding to $w$ in
$\omega^{G/P}_{\mathbb{Q}}(s;T)$ (see Proposition \ref{prop:exp_omega}).
Since
\begin{equation}
  \begin{split}
  \prod_{\alpha\in\Phi_w\setminus\Delta_p}
  \xi(\langle \lambda_p,\alpha^\vee\rangle s+\Ht\alpha^\vee)
  &=
  \xi(s+1)^{N_{p,w}(1,1)}
  \prod_{\alpha\in\Phi_w\setminus\Delta}
  \xi(\langle \lambda_p,\alpha^\vee\rangle s+\Ht\alpha^\vee)
  \\
  &=
  \xi(s+1)^{N_{p,w}(1,1)}
  \prod_{k=0}^\infty
  \prod_{h=2}^\infty
  \xi(ks+h)^{N_{p,w}(k,h)},
\end{split}
\end{equation}
and
\begin{equation}
  \begin{split}
  \prod_{\alpha\in\Phi_w}
  \xi(\langle \lambda_p,\alpha^\vee\rangle s+\Ht\alpha^\vee+1)
&=
  \prod_{k=0}^\infty
  \prod_{h=1}^\infty
  \xi(ks+h+1)^{N_{p,w}(k,h)}
\\
&=
  \prod_{k=0}^\infty
  \prod_{h=2}^\infty
  \xi(ks+h)^{N_{p,w}(k,h-1)},
\end{split}
\end{equation}
we have the expression
\begin{equation}
  \label{eq:mod_H_p}
  H_{p,w}(s)=
  \xi(s+1)^{N_{p,w}(1,1)}
  \prod_{k=0}^\infty
  \prod_{h=2}^\infty
  \xi(ks+h)^{N_{p,w}(k,h)-N_{p,w}(k,h-1)}.
\end{equation}

From this expression, we see that if $N_{p,w}(k,h)-N_{p,w}(k,h-1)<0$,
then $\xi(ks+h)^{N_{p,w}(k,h-1)-N_{p,w}(k,h)}$ appears in the
denominator of the term $H_{p,w}(s)$.  Let $\delta(a)=a$ if $a>0$, and
$\delta(a)=0$ otherwise.
In order to describe the minimal $\xi$ factor
 that cancels all
the denominators of $\omega^{G/P}_{\mathbb{Q}}(s;T)$,
we introduce
\begin{equation}
  \Tilde{M}_p(k,h)=\max_{\substack{w\in W\\ \Delta_p\subset w^{-1}(\Delta\cup \Phi_-)}}
\delta(N_{p,w}(k,h-1)-N_{p,w}(k,h))
\end{equation}
for $(k,h)\in\mathbb{Z}^2$ 
and
we define $D_p(s)$ by
\begin{equation}
\label{eq:def_D_p}
  D_p(s)
  =
  \prod_{k=0}^\infty
  \prod_{h=2}^\infty
  \xi(ks+h)^{N_p(k,h-1)-\Tilde{M}_p(k,h)}.
\end{equation}
By use of the definition \eqref{eq:def_F_p},
we see
\begin{equation}
  \label{eq:mod_F_p}
  \begin{split}
    F_p(s)
    &=
    \prod_{\alpha\in\Phi_+}
    \xi(\langle \lambda_p,\alpha^\vee\rangle s+\Ht\alpha^\vee+1)
    \\
    &=
    \prod_{k=0}^\infty
    \prod_{h=1}^\infty
    \xi(ks+h+1)^{N_p(k,h)}
    \\
    &=
    \prod_{k=0}^\infty
    \prod_{h=2}^\infty
    \xi(ks+h)^{N_p(k,h-1)}
  \end{split}
\end{equation}
and
\begin{equation}
\label{eq:DF}
  D_p(s)
  =
  F_p(s)
  \prod_{k=0}^\infty
  \prod_{h=2}^\infty
  \xi(ks+h)^{-\Tilde{M}_p(k,h)},
\end{equation}
so that $F_p(s)/D_p(s)$ is the minimal $\xi$ factor.
Note that \eqref{eq:def_M_p} is rewritten as
\begin{equation}
  M_p(k,h)=\max_{\substack{w\in W\\ \Delta_p\subset w^{-1}(\Delta\cup \Phi_-)}}(N_{p,w}(k,h-1)-N_{p,w}(k,h)).
\end{equation}

\begin{lemma} 
\label{lm:exp_M_p}
  \begin{enumerate}
  \item For $(k,h)\in\mathbb{Z}^2$ with $h\geq1$,
\begin{equation}
  M_p(k,h)=\Tilde{M}_p(k,h).
\end{equation}
\item For $(k,h)\in\mathbb{Z}^2$,
\begin{equation}
  N_p(k,kc_p-h)-M_p(k,kc_p-h+1)
=N_p(k,h-1)-M_p(k,h).
\end{equation}
\item For $(k,h)\in\mathbb{Z}^2$,
  \begin{equation}
M_p(k,h)=M_q(k,h),
\end{equation}
where $q\in\Aut(\Gamma)p$.
\end{enumerate}
\end{lemma}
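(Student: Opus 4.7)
The plan is to dispatch the three parts in order, using the preliminary Lemmas \ref{lm:N}, \ref{lm:Nw} and \ref{lm:bij_w}.

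For part (1), I would first observe that $\delta(a)=\max(a,0)$, from which $\tilde{M}_p(k,h)=\max(M_p(k,h),0)$; hence the two quantities coincide precisely when $M_p(k,h)\geq 0$. To see that this holds when $h\geq 1$, note that $w=\Id$ is admissible (since $\Delta_p\subset\Delta\subset\Delta\cup\Phi_-$), and for $h\geq 1$ the reduction $N_{p,\Id}(k,h)=\sharp\{\alpha\in\Phi_+\cap\Phi_-~|~\cdots\}=0$, together with the fact that no root has coroot of height $0$ (which handles the boundary case $h=1$), forces $N_{p,\Id}(k,h-1)-N_{p,\Id}(k,h)=0$. Therefore $M_p(k,h)\geq 0$.

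For part (2), I would reindex the admissible set in $M_p(k,kc_p-h+1)$ via the involution $w\mapsto w_0ww_p$ (which preserves admissibility by Lemma \ref{lm:bij_w} (1)) and then substitute Lemma \ref{lm:Nw} (1) at the arguments $h$ and $h-1$ separately. A direct computation gives
\begin{equation*}
N_{p,w_0ww_p}(k,kc_p-h)-N_{p,w_0ww_p}(k,kc_p-h+1)=\bigl(N_p(k,h)-N_p(k,h-1)\bigr)+\bigl(N_{p,w}(k,h-1)-N_{p,w}(k,h)\bigr),
\end{equation*}
so that maximizing over admissible $w$ yields $M_p(k,kc_p-h+1)=N_p(k,h)-N_p(k,h-1)+M_p(k,h)$. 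Combined with $N_p(k,kc_p-h)=N_p(k,h)$ from Lemma \ref{lm:N} (1), this immediately produces the claimed symmetry.

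For part (3), I would invoke the bijection $w\mapsto\varpi w\varpi^{-1}$ between the $p$-admissible and $q$-admissible sets (Lemma \ref{lm:bij_w} (2)) together with the identity $N_{p,w}(k,h)=N_{q,\varpi w\varpi^{-1}}(k,h)$ from Lemma \ref{lm:Nw} (2); each of the two counts defining $M_p(k,h)$ transforms to the corresponding count for $q$, so the two maxima agree. I expect the only point of care in the whole proof to be the boundary case $h=1$ in part (1), where one must explicitly invoke that no root has coroot of height zero; the remainder is direct bookkeeping assembling the preliminary lemmas.
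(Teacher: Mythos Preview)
Your proposal is correct and follows essentially the same route as the paper: in each part you invoke exactly the same preliminary lemmas (\ref{lm:bij_w}, \ref{lm:N}, \ref{lm:Nw}) in the same way, and your key identity in part~(2) is identical to the paper's computation. The only cosmetic difference is in part~(1): you pass to the reduced description $N_{p,w}(k,h)=\sharp\{\alpha\in\Phi_+\cap w^{-1}\Phi_-\mid\cdots\}$ and then treat $h=1$ separately, whereas the paper works directly with the original definition $N_{p,\Id}(k,l)=\sharp\{\alpha\in\Phi_-\mid\cdots\}$, which vanishes for all $l\geq 0$ in one stroke since negative roots have negative height---but this is a purely stylistic simplification, not a different argument.
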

\begin{proof}
  \begin{enumerate}
  \item Note that $\Delta_p\subset(\Delta\cup\Phi_-)$.
    For $l\geq0$, we have
    \begin{equation}
      N_{p,\Id}(k,l)=
      \sharp\{\alpha\in \Phi_-~|~\langle\lambda_p,\alpha^\vee\rangle=k,\Ht\alpha^\vee=l\}=0,
    \end{equation}
    which implies
    \begin{equation}
      N_{p,\Id}(k,h-1)-N_{p,\Id}(k,h)=0.
    \end{equation}
    Hence $\delta(x)$ can be replaced by $x$ in $\Tilde{M}_p(k,h)$.
  \item From Lemma \ref{lm:Nw} (1), we have
    \begin{multline}
      M_p(k,kc_p-h+1)
      \\
      \begin{aligned}
        &=\max_{\substack{w\in W\\ \Delta_p\subset w^{-1}(\Delta\cup \Phi_-)}}
        (N_{p,w}(k,kc_p-h)-N_{p,w}(k,kc_p-h+1))
        \\
        &=\max_{\substack{w\in W\\ \Delta_p\subset w^{-1}(\Delta\cup \Phi_-)}}
        (N_p(k,h)-N_p(k,h-1)-N_{p,w_0ww_p}(k,h)+N_{p,w_0ww_p}(k,h-1)).
      \end{aligned}
    \end{multline}
    Hence
    by Lemmas \ref{lm:bij_w} (1) and \ref{lm:N} (1), 
    \begin{multline}
      N_p(k,kc_p-h)-M_p(k,kc_p-h+1)
      \\
      \begin{aligned}
        &=N_p(k,h)-  
        \max_{\substack{w\in W\\ \Delta_p\subset w^{-1}(\Delta\cup \Phi_-)}}
        (N_p(k,h)-N_p(k,h-1)-N_{p,w_0ww_p}(k,h)+N_{p,w_0ww_p}(k,h-1))
        \\
        &=N_p(k,h-1)-\max_{\substack{w\in W\\ \Delta_p\subset w^{-1}(\Delta\cup \Phi_-)}}(N_{p,w_0ww_p}(k,h-1)-N_{p,w_0ww_p}(k,h))
        \\
        &=N_p(k,h-1)-M_p(k,h).
      \end{aligned}
    \end{multline}
  \item
    By 
    Lemmas
    \ref{lm:bij_w} (2) and \ref{lm:Nw} (2), we have the result.
  \end{enumerate}
\end{proof}

\begin{proof}[Proof of Theorem \ref{thm:exp_xi}]
Since by 
\eqref{eq:DF},
 $F_p(s)/D_p(s)$ is the minimal $\xi$ factor for $\omega^{G/P}_{\mathbb{Q}}(s;T)$,
we have \eqref{eq:exp_xi} by Lemma \ref{lm:exp_M_p} (1) and
the expression
\begin{equation}
  \label{eq:exp_xi2}
  \xi^{G/P}_{\mathbb{Q};o}(s;T)=\Bigl(\frac{F_p(s)}{D_p(s)}\Bigr)\omega^{G/P}_{\mathbb{Q}}(s;T).
\end{equation}
\end{proof}

\begin{lemma}
\label{lm:DD}
\begin{equation}
\label{eq:DD}
  \begin{split}
    D_p(-c_p-s)&=D_p(s)\\
    &=D_q(s),
  \end{split}
\end{equation}
where $q\in\Aut(\Gamma)p$.
\end{lemma}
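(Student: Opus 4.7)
The plan is to write both sides of each equality as products indexed by $(k,h)$ and then compare the exponents term by term. Since the product defining $D_p(s)$ ranges over $h \geq 2$, Lemma \ref{lm:exp_M_p} (1) allows me to replace $\Tilde{M}_p(k,h)$ by $M_p(k,h)$ throughout, giving
\[
D_p(s) = \prod_{k=0}^\infty \prod_{h=2}^\infty \xi(ks+h)^{E_p(k,h)},
\qquad
E_p(k,h) := N_p(k,h-1) - M_p(k,h).
\]

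The equality $D_p(s) = D_q(s)$ is then immediate: Lemma \ref{lm:N} (2) and Lemma \ref{lm:exp_M_p} (3) give $N_p = N_q$ and $M_p = M_q$ pointwise, so $E_p(k,h) = E_q(k,h)$ for every pair $(k,h)$ and the two products coincide factor by factor.

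For the first equality $D_p(-c_p-s) = D_p(s)$, I would apply the functional equation $\xi(x) = \xi(1-x)$ to rewrite $\xi(k(-c_p-s)+h) = \xi(ks + (kc_p - h + 1))$, and then substitute $h' = kc_p - h + 1$ in the inner product for each fixed $k \geq 1$ (the $k=0$ factor is manifestly $s$-independent, hence equal on both sides). Under this involution Lemma \ref{lm:exp_M_p} (2) transports the exponent, $E_p(k, kc_p - h' + 1) = E_p(k, h')$, so the transformed product takes the form $\prod_{h' \leq kc_p - 1} \xi(ks + h')^{E_p(k, h')}$ for each $k \geq 1$.

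The main obstacle is to verify that the new summation range $h' \leq kc_p - 1$ captures exactly the same nonzero terms as the original range $h' \geq 2$. For $k \geq 1$, positive roots have positive height, so $N_p(k, h-1) = 0$ when $h \leq 1$; a direct inspection using $N_{p, \Id}(k, h) = 0$ for $h \geq 0$ (as in the proof of Lemma \ref{lm:exp_M_p} (1)) yields $M_p(k, h) = 0$ for $h \leq 1$ as well, so $E_p(k, h) = 0$ for $h \leq 1$. Applying Lemma \ref{lm:exp_M_p} (2) once more transfers this vanishing to $h \geq kc_p$, hence the support of $E_p(k, \cdot)$ lies in $[2, kc_p - 1]$ in both cases and the two products coincide, completing the proof.
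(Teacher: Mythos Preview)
Your proof is correct and essentially identical to the paper's: both split off the $s$-independent $k=0$ factor, apply the functional equation of $\xi$ together with Lemma~\ref{lm:exp_M_p}~(2) for the first equality, and invoke Lemmas~\ref{lm:N}~(2) and~\ref{lm:exp_M_p}~(3) for the second. The only cosmetic difference is that the paper extends the $h$-product to all of $\mathbb{Z}$ before substituting (so the reindexing $h\mapsto kc_p-h+1$ is a bijection of $\mathbb{Z}$ and no range check is needed), whereas you keep the range $h\geq 2$ and verify afterwards that the support of $E_p(k,\cdot)$ lies in $[2,\,kc_p-1]$.
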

\begin{proof}
  We show the first equality.
  We use 
  \begin{align}
    D^{(0)}&=
    \prod_{h=2}^\infty
    \xi(h)^{N_p(0,h-1)-M_p(0,h)},
    \\
    D^{(1)}(s)&=
    \prod_{k=1}^\infty
    \prod_{h=2}^\infty
    \xi(ks+h)^{N_p(k,h-1)-M_p(k,h)}\\
    \notag
    &=
    \prod_{k=1}^\infty
    \prod_{h=-\infty}^\infty
    \xi(ks+h)^{N_p(k,h-1)-M_p(k,h)},
    \notag
  \end{align}
  since $N_{p,w}(k,h-1)=0$ and $M_p(k,h)=0$ for $k\geq 1$ and $h\leq
  1$.  Note that $D_p(s)=D^{(0)}D^{(1)}(s)$.  It is sufficient to show
  $D^{(1)}(-c_p-s)=D^{(1)}(s)$.  We have
  \begin{equation}
    \begin{split}
      D^{(1)}(-c_p-s)
      &=
      \prod_{k=1}^\infty
      \prod_{h=-\infty}^\infty
      \xi(-kc_p-ks+h)^{N_p(k,h-1)-M_p(k,h)}
      \\
      &=
      \prod_{k=1}^\infty
      \prod_{h=-\infty}^\infty
      \xi(ks+kc_p-h+1)^{N_p(k,h-1)-M_p(k,h)}
      \\
      &=
      \prod_{k=1}^\infty
      \prod_{h=-\infty}^\infty
      \xi(ks+h)^{N_p(k,kc_p-h)-M_p(k,kc_p-h+1)}.
    \end{split}
  \end{equation}
  By Lemma \ref{lm:exp_M_p} (2),
 we obtain
  \begin{equation}
    D^{(1)}(-c_p-s)
    =
    \prod_{k=1}^\infty
    \prod_{h=-\infty}^\infty
    \xi(ks+h)^{N_p(k,h-1)-M_p(k,h)}
    =
    D^{(1)}(s)
  \end{equation}
  and hence the result.

  The second equality of \eqref{eq:DD} follows from
  the definition \eqref{eq:def_D_p}
  and Lemmas 
  \ref{lm:N} (2) and \ref{lm:exp_M_p} (3).
\end{proof}

\begin{proof}[Proof of Theorem \ref{thm:main1}]
By \eqref{eq:exp_xi2} and \eqref{eq:def_Z_p},
we rewrite
  \begin{equation}
    \xi^{G/P}_{\mathbb{Q};o}(s;T)=\Bigl(\frac{F_p(s)}{D_p(s)}\Bigr)\omega^{G/P}_{\mathbb{Q}}(s;T)=\frac{Z_p(s;T)}{D_p(s)}.
  \end{equation}
Then the functional equation
follows 
from Proposition \ref{prop:main} and Lemma \ref{lm:DD}.
\end{proof}

\providecommand{\bysame}{\leavevmode\hbox to3em{\hrulefill}\thinspace}
\providecommand{\MR}{\relax\ifhmode\unskip\space\fi MR }
\providecommand{\MRhref}[2]{%
  \href{http://www.ams.org/mathscinet-getitem?mr=#1}{#2}
}
\providecommand{\href}[2]{#2}

\end{document}